\normalfont\fontsize{12}{15}\bfseries}{\thesection}{1em}{}
\numberwithin{equation}{section}
\theoremstyle{plain}
\newtheorem{theorem}{Theorem}[section]
\newtheorem{proposition}[theorem]{Proposition}
\newtheorem{lemma}[theorem]{Lemma}
\newtheorem{corollary}[theorem]{Corollary}
\theoremstyle{definition}
\newtheorem{notation}[theorem]{Notation}
\newtheorem{remark}[theorem]{Remark}
\newtheorem{definition}[theorem]{Definition}
\newtheorem{example}[theorem]{Example}
\DeclareMathOperator{\lcm}{lcm}
\DeclareMathOperator{\pd}{pd}
\DeclareMathOperator{\LCM}{LCM}
\DeclareMathOperator{\Taylor}{Taylor}
\DeclareMathOperator{\Scarf}{Scarf}
\newcommand{\E}{\mathcal{E}}
\newcommand{\G}{\mathcal{G}}
\newcommand{\M}{\mathcal{M}}
\newcommand{\T}{\mathcal{T}}
\newcommand{\Tq}{\T_q}
\newcommand{\LL}{\mathbb{L}}
\newcommand{\MM}{\mathbb{M}}
\newcommand{\NN}{\mathbb{N}}
\newcommand{\bm}{\mathbf{m}}
\newcommand{\st}{\colon}
\newcommand{\qand}{\quad \mbox{and} \quad}
\newcommand{\qwhere}{\quad \mbox{where} \quad}
\title{Simplicial Resolutions of the Quadratic Power of Monomial Ideals}
\author[S.~M.~Cooper]{Susan M. Cooper}
\address{Department of Mathematics\\
University of Manitoba\\
Winnipeg, MB\\
Canada R3T 2N2}
\email{susan.cooper@umanitoba.ca}
\author[S.~Faridi]{Sara Faridi}
\address{Department of Mathematics \& Statistics,
 Dalhousie University,
 6297 Castine Way,
 PO BOX 15000,
 Halifax, NS,
 Canada B3H 4R2
 } 
\email{faridi@dal.ca}
\author[H.~Mahmood]{Hasan Mahmood}
\address{Department of Mathematics \& Statistics,
 Dalhousie University,
 6297 Castine Way,
 PO BOX 15000,
 Halifax, NS,
 Canada B3H 4R2
 } 
\email{hasan.mahmood@dal.ca.}
\thanks{Cooper was supported by NSERC Discovery grant~2024-05444.  Faridi was supported by NSERC Discovery grant~2023-05929}
\date{}
\subjclass[2020]{13D02; 13F55.}
\keywords{powers of ideals; simplicial complex; Betti numbers; free resolutions; monomial ideals; permutation ideals; scarf ideals.}
\begin{document}
\begin{abstract}
Given {\it any} monomial ideal $ I $ minimally generated by $ q $ monomials, we define a simplicial complex $\MM_q^2$ that supports a resolution of $ I^2 $. We also define a subcomplex $\MM^2(I)$, which depends on the monomial generators of $I$ and also supports the resolution of $ I^2 $. As a byproduct, we obtain bounds on the projective dimension of the second power of any monomial ideal. We also establish bounds on the Betti numbers of $ I^2 $, which are significantly tighter than those determined by the Taylor resolution of $ I^2 $.  {{Moreover, we introduce the permutation ideal $\Tq$ which is generated by $q$ monomials.  For any monomial ideal $I$ with $q$ generators, we establish that $\beta(I^2) \leq \beta({{\Tq}^2})$.}} We show that the simplicial complex $\MM_q^2$ supports the minimal resolution of ${\Tq}^2$. In fact, $\MM_q^2$ is the Scarf complex of  ${\Tq}^2$.

\end{abstract}
\maketitle

\setlist{font=\normalfont}

\section{Introduction} 

Monomial ideals offer a rich setting for exploring the interplay between algebraic, combinatorial and topological structures. One such foundational interconnection was described in Taylor's thesis \cite{Taylor1966}, which was further extended and developed in \cite{Bayer1998a}, \cite{Bayer1998b}, \cite{Faridi2014},\cite{Lyubeznik1998}, and \cite{Uwe2009}, to name a few. These works collectively highlight how algebraic invariants of monomial ideals, such as Betti numbers and minimal free resolutions, can be studied through combinatorial and topological constructions like simplicial and  cellular complexes. 

 The Taylor resolution of any monomial ideal $ I $, minimally generated by $ q $ monomials, is a free resolution of $ I $ constructed using a $(q-1)$-simplex. This resolution is obtained by \enquote{homogenizing} the simplicial chain complex of the $(q-1)$-simplex. This simplex, with vertices labeled by the monomial generators and faces labeled by the least common multiples of those vertices, is called the Taylor complex of $ I $, denoted by Taylor$(I)$. Bayer and Sturmfels \cite{Bayer1998b} extended the idea of constructing a free resolution of a monomial ideal from the $(q-1)$-simplex to more general simplicial complexes on $q$ vertices, and provided a criterion for when such a homogenized chain complex would be a resolution of the ideal. If the homogenized simplicial chain complex of a simplicial complex $\Delta$ results in a free resolution of $I$, then $\Delta$ is said to \say{support} a free resolution of $I$. 
 
 The Taylor complex $\text{Taylor}(I)$ on $q$ vertices works for any monomial ideal generated by $q$ monomials and always supports a free resolution of such ideals. However, this simplex is often excessively large, resulting in a resolution for $I$ that is far from minimal. Consequently, the upper bounds on the Betti numbers provided by the Taylor resolution, i.e., $\beta_j(I) \leq \binom{q}{j+1}$, are often far from sharp. 
 This issue becomes more pronounced when considering the Taylor resolution of powers of $ I $. For example, if $ q \geq 2 $, the Taylor resolution of $ I^2 $ is never minimal due to the divisibility relations shown in \cref{lemma1}. In \cite{Cooper2021}, the authors introduced a simplicial complex $\LL_q^2$, a \enquote{prototype} for the Taylor complex of $I^2$ in the case where $I$ is generated by $q$ square-free monomials. This complex is a proper subcomplex of a $\binom{q}{2}$-simplex. They showed that the resolution of $ I^2 $ supported on $\LL_q^2$  is closer to a minimal resolution than the Taylor resolution. This not only provides sharper bounds on the Betti numbers but also is minimal for many ideals.

 However, there is a limitation to the complex $\LL_q^2$ as it works only for square-free monomial ideals.  Indeed, \cref{Lnotenough} provides an example where $\LL_q^2$ does not support a free resolution for the quadratic power of a monomial ideal. In this paper, we address this gap by introducing a new simplicial complex $\MM_q^2$ and a subcomplex $\MM^2(I)$, and show that both support the free resolution of $ I^2 $ for {\it any} monomial ideal generated by $ q $ monomials (\cref{mainthm}). This complex is also a proper subcomplex of $\Taylor(I^2)$, substantially improving the upper bounds on the Betti numbers of $ I^2 $ compared to those provided by $\Taylor(I^2)$.

We also provide further evidence to the generally acknowledged fact  that polarization, a widely used technique in the study of monomial ideals, may be insufficient when studying resolutions of powers of monomial ideals (see \cref{rem: polarization}). Polarizing powers of ideals can lead to the loss of several homological invariants, such as projective dimension, regularity, and Betti numbers.

 To make this paper self-contained, we have included a preliminaries section that recalls all the basic notions and concepts used here. In \cref{p:tree}, we show that $\MM_q^2$ is a simplicial tree, which then leads to the proof that  $\MM_q^2$ and $\MM^2(I)$ support free resolutions of $I^2$ when $I$ is an ideal generated by $q$ monomials.  \cref{coropd} provides a sharp upper bound on the projective dimension of $ I^2 $. \cref{cor1} establishes sharper general bounds on the Betti numbers of $ I^2 $, while \cref{cor2} provides even tighter bounds by using information about the generators of $ I^2 $.

 In \cref{s:4}, we introduce the permutation ideal $\Tq$, which is generated by $ q $ monomials. These ideals turn out to be useful in studying the quadratic powers of monomial ideals.  \cref{scarfthm}, our second main theorem, shows that the Scarf complex of the square of the permutation ideal coincides with $\MM_q^2$. On the other hand, \cref{corobettivector} shows that the Betti vector of the permutation ideal ${\Tq}^2$ serves as an upper bound for the Betti vector of the quadratic power of any monomial ideal.

\section{Preliminaries}\label{s:2}

In this section, we establish the basic definitions and notations that will be used throughout the paper. 
For a comprehensive overview of this field, interested readers may consult \cite{Orlik2007} and \cite{PeevaBook}, both of which offer detailed expositions of this rich beautiful area of study.
We begin by letting $ K $ be a field and $ S = K[x_1, x_2, \ldots, x_n] $. An element of $ S $ of the form $ {x_1}^{a_1} {x_2}^{a_2} \cdots {x_n}^{a_n} $, where $ a_1, a_2, \ldots, a_n \in \NN \cup \{0\} $, is called a monomial. The set of monomials in $S$ forms a $ K $-basis of $ S $. An ideal $ I $ of $ S $ is said to be a monomial ideal if it is generated by a set of monomials in $S$. By the Hilbert Basis Theorem \cite{BinomialIdealsHH2018}, it is clear that any monomial ideal of $ S $ is generated by finitely many monomials. The minimal generating set of $I$ is unique and is denoted by $\G(I)$. 

A simplicial complex is a fundamental structure in algebraic topology and combinatorial mathematics. It also plays a vital role in this paper.  We introduce the concept of simplicial complexes and several associated terms.

\begin{definition}
Let $V$ be a vertex set consisting of $n$ elements. A simplicial complex $\Delta$ on $V$ is a collection of subsets of $V$ that is closed under inclusion. That is,  if $F \in \Delta$ and $G \subseteq F$, then $G \in \Delta$.
\begin{enumerate}
[label=(\alph*)]
    \item A member of $\Delta$ is referred to as a \textbf{face}.
    \item  The \textbf{facets} of $\Delta$ are those faces which are not contained in any larger face.
    \item The \textbf{dimension} of a face $F \in \Delta$ is given by $\dim(F) = |F| - 1$.
    \item The \textbf{dimension} of $\Delta$ is the largest dimension of any of its faces.
    \item $\Delta$ is known as a \textbf{simplex} if it contains exactly one facet.
    \item The \textbf{$f$-vector} $\mathbf{f}(\Delta) = (f_0, \ldots, f_d)$ of a $d$-dimensional simplicial complex $\Delta$ is a $(d+1)-\text{tuple}$ of integers, where $f_i$ represents the number of $i$-dimensional faces in $\Delta$.
\end{enumerate}
\end{definition}

A simplicial complex can be uniquely determined by its facets. We denote a simplicial complex $\Delta$ with facets $F_1, \ldots, F_q$ as
$\Delta = \langle F_1, \ldots, F_q \rangle.$ 
Moreover, low-dimensional simplicial complexes can be represented geometrically. \cref{fig:geom-complex} illustrates the geometric representation of a $3$-dimensional simplicial complex on $9$ vertices. 
\begin{figure}[H]
    \centering
    \begin{tikzpicture}[scale=1.5]

        \coordinate (A) at (3.5, 0.5);
        \coordinate (B) at (3.7, -0.2);
        \coordinate (C) at (4.4, -0.4);
        \coordinate (D) at (4, 1.3);
        \coordinate (E) at (2.2, 1.5);
        \coordinate (F) at (1.8, 0.8);
        \coordinate (H) at (1.5, 0.3);
        \coordinate (G) at (2.2, -0.3);
        \coordinate (I) at (0.7, -0.2);

        \filldraw[fill=gray, opacity=0.6] (B) -- (C) -- (D) -- cycle;
        \filldraw[fill=gray, opacity=0.6] (B) -- (D) -- (F) -- cycle;\filldraw[fill=gray, opacity=0.6] (A) -- (B) -- (D) -- cycle;
        \filldraw[fill=gray, opacity=0.6] (E) -- (D) -- (F) -- cycle;
        \filldraw[fill=gray, opacity=0.6] (F) -- (G) -- (H) -- cycle;
        \filldraw[fill=gray,opacity=0.6] (G) -- (H) -- (I) -- cycle;\filldraw[fill=red!60, opacity=0.6] (E) -- (F) -- (H) -- cycle;

        \draw[thick] (A) -- (B) -- (C) -- (D) -- cycle;
        \draw[thick] (F) -- (G) -- (H) -- cycle;
        \draw[thick] (A) -- (D);
        \draw[thick] (B) -- (D);
        \draw[thick] (E) -- (D);
        \draw[thick] (F) -- (D);
        \draw[thick] (F) -- (B);
        \draw[thick] (F) -- (G);
        \draw[thick] (E) -- (F) -- (H) -- cycle;
        \draw[thick] (H) -- (G) -- (I) -- cycle;
        \draw[dashed, thick, black!80!black] (A) -- (C);
        \draw[dashed, thick, black] (F) -- (D);
        \draw[dashed, thick, black!80!black] (A) -- (F);

        \foreach \p in {A,B,C,D,E,F,G,H,I}
          \fill[black] (\p) circle (1pt);

        \node[above left] at (A) {3};
        \node[below right] at (B) {2};
        \node[below right] at (C) {1};
        \node[above] at (D) {4};
        \node[above] at (E) {5};
        \node[above left] at (F) {6};
        \node[below left] at (G) {9};
        \node[above left] at (H) {7};
        \node[left] at (I) {8};

    \end{tikzpicture}
    \caption{A $3$-dimensional simplicial complex $\Delta$.}
    \label{fig:geom-complex}
\end{figure}
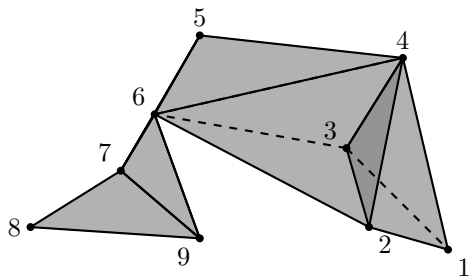

 We now aim to define a special type of simplicial complex, namely, a \say{simplicial tree}. This type of simplicial complex is of fundamental interest in our paper. 
 
\begin{definition}
Let $\Delta$ be a simplicial complex on a vertex set $V$. 
\begin{enumerate}[label=(\alph*)]
 
\item A \textbf{subcomplex} of $\Delta$ is a subset of $\Delta$ that is also a simplicial complex.

\item A {\bf subcollection} of $\Delta$ is a subcomplex of $\Delta$ whose facets are also facets of $\Delta$. 

\item  Given $W \subseteq V$, the \textbf{induced subcomplex} of $\Delta$ on $W$ is the subcomplex, denoted by $\Delta_W$, such that $\Delta_W = \{\sigma \in \Delta \mid \sigma \subseteq W\}$.

\item $\Delta$ is said to be \textbf{connected} if for any two facets $F$ and $G$, there is sequence of facets $F_0=F,F_1,F_2,\ldots,F_r=G$ such that 
$$ F_0 \cap F_1\neq \emptyset, \quad 
F_1\cap F_2\neq \emptyset, \quad 
\ldots, \qand
F_{r-1}\cap F_r\neq \emptyset.$$

\item A facet $F$ of $\Delta$ is a \textbf{leaf} if it is the only facet of $\Delta$, or there exists another facet $G \ne F$ (called a \textbf{joint}) such that $F \cap H \subseteq G$ for all facets $H \ne F$.

\item $\Delta$ is a \textbf{simplicial forest} if every subcollection of $\Delta$ has a leaf. A connected simplicial forest is said to be a \textbf{simplicial tree}.
\end{enumerate}
\end{definition}

 The left-most simplicial complex depicted in \cref{Figureqtree} is a simplicial tree on six vertices, while the one on the right is not, since the subcollection
 $\langle F_1,F_2,F_3\rangle$ has no leaf. Also, note that the complex on the right in \cref{Figureqtree} is isomorphic to $\LL_3^2$ defined in \cite{Cooper2021}.

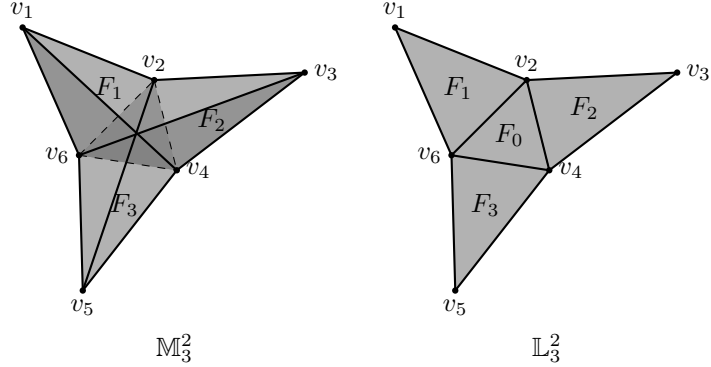
\begin{figure}[h]
    \centering
\begin{tabular}{cc}
    \begin{tikzpicture}[scale=1]

        \coordinate (A) at (4.5,0);  
        \coordinate (B) at (3.75,1.7);  
        \coordinate (C) at (5.5,1);  
        \coordinate (D) at (5.8,-0.2);  
        \coordinate (E) at (4.55,-1.8);  
        \coordinate (F) at (7.5,1.1);  
        
        \fill[gray,opacity=0.6] (E) -- (C) -- (D) -- cycle;
        \fill[gray,opacity=0.6] (A) -- (C) -- (E) -- cycle;
        
        \fill[gray,opacity=0.6] (A) -- (C) -- (D) -- cycle;
        \fill[gray,opacity=0.6] (A) -- (D) -- (B) -- cycle;
        \fill[gray,opacity=0.6] (A) -- (C) -- (B) -- cycle;
         \fill[gray,opacity=0.6] (F) -- (C) -- (D) -- cycle;
        \fill[gray,opacity=0.6] (F) -- (A) -- (D) -- cycle;
       
        \draw[thick] (A) -- (B);
        \draw[thick] (C) -- (B);
        \draw[thick] (D) -- (B);
        \draw[thick] (A) -- (E);
        \draw[thick] (C) -- (E);
        \draw[thick] (D) -- (E);
         \draw[thick] (A) -- (F);
        \draw[thick] (C) -- (F);
        \draw[thick] (D) -- (F);

        \draw[dashed] (A) -- (C);
        \draw[dashed] (A) -- (D);
         \draw[dashed] (C) -- (D);
        
        \foreach \point in {A,B,C,D,E,F} \fill[black] (\point) circle (1.2pt);

        \node[below] at (E) {$ v_5 $};
        \node[left] at (A) {$ v_6 $};
        \node[above] at (B) {$v_1$};
        \node[above] at (C) {$ v_2 $};
        \node[right] at (D) {$ v_4 $};
        \node[right] at (F) {$ v_3 $};
\node at ({4.9}, { (0 + 1.7 + 1)/3 }) {$F_1$};  
\node at ({(5.5 + 5.8 + 7.5)/3}, {0.47}) {$F_2$};  
\node at ({5.1}, { (0 + -0.2 + -1.8)/3 }) {$F_3$};  

\end{tikzpicture}
&
  \begin{tikzpicture}[scale=1]
        \coordinate (A) at (-0.5,0);  
        \coordinate (B) at (-1.25,1.7);  
        \coordinate (C) at (0.5,1);  
        \coordinate (D) at (0.8,-0.2);  
        \coordinate (E) at (-.45,-1.8);  
        \coordinate (F) at (2.5,1.1);  
        
      
        \fill[gray,opacity=0.6] (A) -- (B) -- (C) -- cycle;
        \fill[gray,opacity=0.6] (C) -- (D) -- (F) -- cycle;
        \fill[gray,opacity=0.6] (A) -- (D) -- (E) -- cycle;
         \fill[gray,opacity=0.6] (A) -- (C) -- (D) -- cycle;

        \draw[thick] (A) -- (B);
        \draw[thick] (A) -- (E);
        \draw[thick] (C) -- (B);
        \draw[thick] (C) -- (D);
        \draw[thick] (A) -- (C);
        
        \draw[thick] (A) -- (D);
        \draw[thick] (D) -- (E);
         
        \draw[thick] (C) -- (F);
        \draw[thick] (D) -- (F);

        
         \draw[dashed] (C) -- (D);
        
        \foreach \point in {A,B,C,D,E,F} \fill[black] (\point) circle (1.2pt);

        \node[below] at (E) {$ v_5 $};
        \node[left] at (A) {$ v_6 $};
        \node[above] at (B) {$v_1$};
        \node[above] at (C) {$ v_2 $};
        \node[right] at (D) {$ v_4 $};
        \node[right] at (F) {$ v_3 $};
       
        \node at ({(-0.5 + -1.25 + 0.5)/3}, { (0 + 1.7 + 1)/3 }) {$F_1$};  
        \node at ({(0.5 + 0.8 + 2.5)/3}, { (1 + -0.2 + 1.1)/3 }) {$F_2$};  
        \node at ({(-0.5 + 0.8 + -0.45)/3}, { (0 + -0.2 + -1.8)/3 }) {$F_3$};  
        \node at ({(-0.5 + 0.5 + 0.8)/3}, { (0 + 1 + -0.2)/3 }) {$F_0$};  

    \end{tikzpicture}\\
    $\MM^2_3$ & $\LL^2_3$
\end{tabular}
\caption{$\MM^2_3$ (on the left) is a simplicial tree with six vertices, whereas the complex $\LL_3^2$ (on the right) is not a simplicial tree.}

    \label{Figureqtree}
\end{figure}

 Free resolutions are fundamental tools for studying algebraic structures in commutative and homological algebra. Their history traces back to David Hilbert in the early twentieth century. In recent decades, modern approaches to studying free resolutions have become more combinatorial and geometric, e.g., see \cite{Eisenbud}, \cite{EzraMiller}, \cite{PeevaBook}. In this paper, we  aim to explore the free resolution of the second power $I^2$ for a monomial ideal $I$ through a topological approach. 
We now define free resolutions and introduce some related concepts.

\begin{definition}
Let $I$ be a monomial ideal in the polynomial ring $S = K[x_1, x_2, \ldots, x_n]$, minimally generated by $q$ monomials.  
A \textbf{free resolution} of $I$ is an exact sequence of the form  
\begin{equation}
    \cdots \to F_i \to F_{i-1} \to \cdots \to F_1 \to F_0 \to I \to 0, 
    \label{eqres}
    \end{equation}
where $i \in \mathbb{N}$ and each $S$-module $F_i$ is isomorphic to a finite direct sum of copies of $S$. A free resolution of $I$ is called a \textbf{minimal free resolution} if the rank of each $F_i$, denoted by $\beta_i$, is the smallest possible among all free resolutions of $I$ at the $i^\text{th}$ position. That is, it is of the form  
 \begin{equation}
    0 \to S^{\beta_p} \to S^{\beta_{p-1}} \to \cdots \to S^{\beta_1} \to S^{\beta_0} \to I \to 0. 
    \label{eqminres}
    \end{equation}

   The minimal free resolution of an ideal is unique up to isomorphism. The integer $\beta_i$ in \eqref{eqminres}, for each $ i \in [p]=\{1,\ldots,p\} $, is called the \textbf{$i^\text{th}$ Betti number} of $I$. 
    The length $p$ of the minimal free resolution of  $I$ is called the \textbf{projective dimension} of $ I $ over $S$ and is denoted by $\pd_S(I)$.
\end{definition}

 In her thesis \cite{Taylor1966}, Taylor showed that a free resolution of a monomial ideal $ I $ in $ S $ with $ q $ minimal generators can be constructed using a $(q-1)$-simplex. This simplex is known as the Taylor complex of $ I $, denoted $\Taylor(I)$, and it supports a free resolution of $I$.  In this complex, the vertices are labeled with the $ q $ minimal monomial generators of $ I $, and each face is labeled with the least common multiple of the monomial labels of its vertices. Further study in this area by Bayer, Peeva, and Sturmfels \cite{Bayer1998a,Bayer1998b} showed that other simplicial complexes with $ q $ vertices and such labeled faces can also be used to derive a free resolution of $ I $ by homogenizing their simplicial chain complexes. For a detailed illustration of how homogenization works in the case of a $2$-simplex, see {\cite[Example 2.1]{Chau2025}}. The criteria for a simplicial complex to support a free resolution of a monomial ideal are described in \cref{thmbasic} below. To state this theorem, we first introduce some necessary notation.

 For a monomial ideal $I$ minimally generated by $m_1,\ldots,m_q$ and $A \subseteq [q]=\{1,\ldots,q\}$ we define 
 $$m_A=\lcm(m_a \st a \in A)$$ 
 and we let $\LCM(I)$ be the {\bf lcm lattice} of $I$, which is  the set
 $$\LCM(I)=\{ m_A \st A\subset [q] \}$$
  partially ordered by division. 
  
  If $\Delta$ is a simplicial complex on $q$ vertices, we label its  $0$-dimensional faces with minimal monomial generators of $I$ and assign any other face $\tau\in \Delta$ the label  $$m_\tau=\lcm\{m_i \st m_i\in \tau\}.$$
 For any monomial $\bm$, we use the notation $\Delta_{\bm}$ to denote the induced subcomplex of $\Delta$ consisting of those faces whose monomial labels divide $\bm$, that is, 
$$\Delta_{\bm}=\{\tau\in \Delta \st m_\tau  \mid  \bm\}.$$

\begin{theorem}[{\bf Criterion for supporting a free resolution}]\label{BPS1} \label{thmbasic} 
Let $\Delta$ be a simplicial complex on $q$ vertices, and let $I$ be an ideal minimally generated by $q$ monomials  in a polynomial ring over a field. Label the vertices of $\Delta$ with the $q$ monomial generators of $I$ and label each face $\tau \in \Delta$ with the monomial $m_\tau$.   Then
\begin{enumerate}
    \item $\Delta$ supports a free resolution of $I$ if and only if for every monomial $\bm$ in $\LCM(I)$, $\Delta_{\bm}$ is empty or acyclic {{\em(}see~{\em\cite{Bayer1998a}}{\em)}};
    \item if $\Delta$ is a simplicial tree, then $\Delta$ supports a free resolution of $I$ if and only if for every monomial $\bm$ in $\LCM(I)$, $\Delta_{\bm}$ is empty or connected {\em(}see~{\em\cite{Faridi2014}}{\em)}; 
    \item the free resolution of $I$ supported by $\Delta$  is minimal if and only if for every pair of faces $\tau_1, \tau_2\in\Delta$ with $\tau_1\subsetneq\tau_2$, we have $m_{\tau_1}\neq m_{\tau_2}$ {{\em(}see~{\em\cite{Bayer1998a}}{\em)}}.
\end{enumerate}

\end{theorem}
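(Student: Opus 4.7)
The plan is to approach all three parts by studying the \emph{homogenized simplicial chain complex} $\mathcal{F}_\bullet(\Delta)$ associated to the labeling: for each face $\tau \in \Delta$, place a free generator $e_\tau$ in homological degree $|\tau|$ with multidegree $m_\tau$, and define
\[
\partial(e_\tau) = \sum_{\tau' \subset \tau,\ |\tau'|=|\tau|-1} \pm \frac{m_\tau}{m_{\tau'}}\, e_{\tau'},
\]
with augmentation sending $e_{\{m_i\}} \mapsto m_i$. Asking whether $\Delta$ supports a free resolution of $I$ becomes asking whether $\mathcal{F}_\bullet(\Delta)$ is exact in positive homological degree and surjects onto $I$.

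For part (1), I would decompose $\mathcal{F}_\bullet(\Delta)$ into its $\mathbb{Z}^n$-graded strands. In multidegree $\bm$, the basis in homological degree $i$ is given by those $e_\tau$ with $m_\tau \mid \bm$ and $|\tau| = i$, which is precisely the set of $(i-1)$-faces of the induced labeled subcomplex $\Delta_\bm$; the differentials restrict, up to sign, to the ordinary simplicial boundaries. Thus the strand in degree $\bm$ is a shift of the reduced chain complex of $\Delta_\bm$, and exactness of the strand is equivalent to $\Delta_\bm$ being empty or acyclic. Taking $\bm$ to range over $\LCM(I)$ is enough, since for $\bm \notin \LCM(I)$ the subcomplex $\Delta_\bm$ coincides with $\Delta_{\bm'}$ for the largest $\bm' \in \LCM(I)$ dividing $\bm$.

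For part (2), I would combine part (1) with the structural fact that simplicial forests have the \emph{acyclic connected} property: any connected subcollection of a simplicial forest is acyclic. The strategy is to argue that $\Delta_\bm$ is itself a subcollection of the simplicial tree $\Delta$ (since passing to faces whose labels divide $\bm$ preserves facet maximality on the surviving vertex set), hence is a simplicial forest; peeling off a leaf facet $F$ with joint $G$ and applying Mayer--Vietoris to $\Delta_\bm = (\Delta_\bm \setminus F) \cup F$, whose intersection lies inside the simplex $G$, kills all $\tilde H_i$ for $i \geq 1$ by induction on the number of facets. Thus the acyclicity condition of part (1) collapses to connectedness.

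For part (3), minimality of the resolution is equivalent to the statement that no entry of any matrix representing $\partial_i$ is a unit of $S$. The $(\tau_1,\tau_2)$-entry for a codimension-one inclusion $\tau_1 \subsetneq \tau_2$ is, up to sign, $m_{\tau_2}/m_{\tau_1}$, and this monomial is a unit exactly when $m_{\tau_1} = m_{\tau_2}$. The condition stated in the theorem is \emph{a priori} stronger (it ranges over all $\tau_1 \subsetneq \tau_2$, not just codimension-one pairs), but a chain $\tau_1 \subsetneq \sigma \subsetneq \tau_2$ with $m_{\tau_1} = m_{\tau_2}$ forces $m_{\tau_1} = m_\sigma$, so the two formulations are equivalent. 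The main obstacle is the Mayer--Vietoris step in part (2): one must verify that the labeled leaf--joint structure is inherited by $\Delta_\bm$ in a way that supports the induction, which is the subtle combinatorial input separating Faridi's criterion from the Bayer--Peeva--Sturmfels criterion.
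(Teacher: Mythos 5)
This theorem is not proved in the paper; it is recalled from the literature, with parts (1) and (3) attributed to Bayer--Peeva--Sturmfels and part (2) to Faridi, so there is no in-paper proof to compare against. Your sketches of parts (1) and (3) are essentially the standard arguments: the multigraded strand decomposition of the homogenized chain complex, reduction to $\LCM(I)$ via the largest lattice element dividing $\bm$, and the identification of the unit entries of the differential with codimension-one label coincidences, together with the easy equivalence between the codimension-one condition and the condition over all nested pairs. Those parts are sound.

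The gap is in part (2), and it is a real one. You assert that $\Delta_\bm$ is a \emph{subcollection} of the simplicial tree $\Delta$, ``since passing to faces whose labels divide $\bm$ preserves facet maximality on the surviving vertex set.'' This is false. By the paper's definition, a subcollection is a subcomplex whose facets are facets of $\Delta$; but $\Delta_\bm$ is the \emph{induced} subcomplex on the vertex set $\{\ell : m_\ell \mid \bm\}$, and its facets are typically proper subfaces of facets of $\Delta$ (already for $\MM^2_3$ one can choose $\bm$ so that $\Delta_\bm$ is a single edge inside a $2$-dimensional facet). Your parenthetical reason only restates that facets of $\Delta_\bm$ are maximal \emph{within} $\Delta_\bm$, which is the definition of a facet, not a reason for them to be facets of $\Delta$. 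If $\Delta_\bm$ were a subcollection, it would be a forest tautologically and your Mayer--Vietoris induction would go through; but since it is merely an induced subcomplex, you need the nontrivial lemma (due to Faridi) that induced subcomplexes of simplicial forests are again simplicial forests. That lemma is precisely the combinatorial input you identify as ``the subtle step'' at the end, but you have mislocated it: the subtlety is not in propagating the leaf--joint structure through the induction once you know $\Delta_\bm$ is a forest, it is in establishing that $\Delta_\bm$ is a forest in the first place, and your proposed shortcut (``it's a subcollection'') does not do that. Once that lemma is invoked, the remainder of your Mayer--Vietoris argument (the leaf $F$ meets the rest inside the simplex on $F \cap G$, so the union kills $\tilde H_{\ge 1}$, leaving only the connectedness of $\tilde H_0$) is correct.
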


\begin{remark}\label{remarkmain} \cref{BPS1} establishes that the $f$-vector of a simplicial complex $\Delta$, which supports a resolution of a monomial ideal $I$, serves as an upper bound for the Betti vector of $I$. More precisely, for each $i \leq d = \dim(\Delta)$, we have
\[
\beta_i(I) \leq f_i, \qwhere \mathbf{f}(\Delta) = (f_0, \dots, f_d).
\]
Furthermore, if $\Delta$ provides a minimal free resolution of $I$, then equality holds in the above relation.
\end{remark}

\cref{BPS1} says that the minimality of a free resolution of an ideal depends on the condition that no two nested subfaces share the same monomial label. If we extend this to an extreme condition where no two faces share the same monomial label, then this leads to the following definition.

\begin{definition}[{\bf Scarf Complex}]\label{scarf} Let $I$  be a monomial ideal minimally generated by the $q$ monomials $m_1,m_2,\dots,m_q$. The \textbf{Scarf complex} of $ I $, denoted by Scarf$(I)$, is the simplicial complex
\[
\Scarf(I) = \left\{ \tau \in \Taylor(I) \mid m_{\tau} \neq m_{\tau_1} \text{ for all } \tau_1 \in \Taylor(I), \tau_1 \not = \tau \right\}.
\]

\end{definition}
  The ideal $ I $ is called a \textbf{Scarf ideal} if its Scarf complex, $ \Scarf(I) $, supports a free resolution of $ I $. The faces of the Scarf complex of $ I $ are referred to as the \textbf{Scarf faces} of $ I $. A free resolution of $ I $ supported on its Scarf complex is called a \textbf{Scarf resolution}, which, in fact, forms a minimal free resolution of $I$ (see \cite{Mermin2012} for details). The following lemma from \cite{Faridi2023} characterizes the Scarf faces of a monomial ideal and we will use it later in this paper.

\begin{lemma}{\rm ({\cite[Lemma 2.2]{Faridi2023}})} \label{lem:scarf}
Let $ I $ be a monomial ideal and let $ \gamma \in \Taylor(I) $. Then $ \gamma \in \Scarf(I) $ if and only if both of the following statements hold:
\begin{enumerate}
    \item $ m_{\gamma} \neq m_{\gamma \setminus \{v\}} $ for all vertices $ v \in \gamma $;
    \item $ m_{\gamma \cup \{v\}} \neq m_{\gamma} $ for all vertices $ v \in \Taylor(I) \setminus \gamma $.
\end{enumerate}

\end{lemma}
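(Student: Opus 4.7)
The statement is essentially a local reformulation of the Scarf condition, saying that to check $m_\gamma$ differs from every other monomial label in $\Taylor(I)$, it suffices to check labels of immediate sub- and super-faces. The plan is to prove the two directions separately: the forward direction is immediate from \cref{scarf}, while the reverse direction needs a short case analysis on how an offending face $\tau$ can sit relative to $\gamma$.

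For the forward direction, assume $\gamma \in \Scarf(I)$. By \cref{scarf}, $m_\gamma \neq m_\tau$ for every $\tau \in \Taylor(I)$ with $\tau \neq \gamma$. Specializing to $\tau = \gamma \setminus \{v\}$ for each $v \in \gamma$ yields condition (1), and specializing to $\tau = \gamma \cup \{v\}$ for each $v \in \Taylor(I) \setminus \gamma$ yields condition (2).

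For the reverse direction, I would argue by contradiction: suppose (1) and (2) hold and there exists $\tau \in \Taylor(I)$ with $\tau \neq \gamma$ and $m_\tau = m_\gamma$. The argument splits into three cases. If $\tau \subsetneq \gamma$, pick any $v \in \gamma \setminus \tau$ and sandwich $\tau \subseteq \gamma \setminus \{v\} \subseteq \gamma$; the divisibility chain $m_\tau \mid m_{\gamma \setminus \{v\}} \mid m_\gamma$ combined with $m_\tau = m_\gamma$ forces $m_{\gamma \setminus \{v\}} = m_\gamma$, contradicting (1). Symmetrically, if $\gamma \subsetneq \tau$, pick $v \in \tau \setminus \gamma$; then $m_\gamma \mid m_{\gamma \cup \{v\}} \mid m_\tau = m_\gamma$ forces $m_{\gamma \cup \{v\}} = m_\gamma$, contradicting (2).

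The main (and only nontrivial) obstacle is the remaining incomparable case $\tau \not\subseteq \gamma$ and $\gamma \not\subseteq \tau$, since the sandwich argument above does not apply directly. It is resolved by the same divisibility idea: choose any $v \in \tau \setminus \gamma$, so the vertex label $m_v$ divides $m_\tau = m_\gamma$. Then $m_{\gamma \cup \{v\}} = \lcm(m_\gamma, m_v) = m_\gamma$, once again contradicting (2). Together the three cases exhaust all possibilities and complete the proof.
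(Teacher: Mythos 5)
The paper does not prove this lemma; it is quoted from \cite[Lemma 2.2]{Faridi2023}, so there is no internal proof to compare against. Your argument is nonetheless correct and self-contained. The forward direction is indeed immediate from the definition of the Scarf complex, and the reverse direction by contradiction via divisibility of labels along inclusions is exactly the standard argument. One small simplification you may want: your third case (incomparable $\tau$, $\gamma$) in fact subsumes your second case ($\gamma \subsetneq \tau$), since the only property used is the existence of some $v \in \tau \setminus \gamma$, which then gives $m_v \mid m_\tau = m_\gamma$ and hence $m_{\gamma \cup \{v\}} = m_\gamma$, violating (2). So a two-case split on whether $\tau \setminus \gamma$ is empty or not already exhausts all possibilities; when it is empty you have $\tau \subsetneq \gamma$ and apply (1) as in your first case, and when it is nonempty you apply the $\lcm$ argument.
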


\section{A simplicial complex that supports a free resolution of $ I^2 $}\label{s:3}

The authors in~\cite{Cooper2021} gave a description of a simplicial complex $\LL^2_q$ much smaller than the Taylor complex on the same number of generators, which supports a free resolution of $I^2$ when $I$ is any  ideal minimally generated by $q$ 
square-free monomials. By doing this, they replaced the binomial bounds on the Betti numbers of $I^2$ with much smaller ones. Our work in this paper begins with an observation that if
the generators of $I$ are not square-free, then $\LL^2_q$ may  not be large enough to support a free resolution of $I^2$.

\begin{example}[{\bf An example where $\LL_q^2$ is not  large enough}]\label{Lnotenough}
     Consider the monomial ideal $ I $ in six variables as in \cref{remark1}, where $ I = (m_1, m_2, m_3) $ with $ m_1 = {x_1}^3{x_2}^2{x_3}^3x_4{x_5}^2x_6 $, $ m_2 = {x_1}^2{x_2}^3x_3{x_4}^3x_5{x_6}^2 $, and $ m_3 = x_1x_2{x_3}^2{x_4}^2{x_5}^3{x_6}^3 $. Applying \cref{thmbasic} to $\mathbf{m} = \lcm({m_1}^2, m_2m_3)$ makes the induced complex $(\LL_3^2)_{\mathbf{m}}$ disconnected, indicating that $\LL_3^2$ (see \cref{Figureqtree}) does not support a resolution of $ I^2 $.
\end{example}

In this section, we introduce  two new simplicial complexes specifically designed to support a free resolution of $I^2$, where $I$ is a given monomial ideal. The construction of this simplicial complex is similar but  more general  than the complexes $\LL_q^2$ and  $\LL^2(I)$ described in \cite{Cooper2021} as it will cover the quadratic power of  {\it any} monomial ideal rather than just the square-free ones. We start with the following definition.

\begin{definition}\label{M2complexdefinition} Consider the vertex set $ V = \{\ell_{i,j} \st 1 \leq i \leq j \leq q\} $. Define the set $$\M = \{\ell_{i,j} \st 1 \leq i < j \leq q\},$$ 
and set $\M_i = \M \cup \{\ell_{i,i}\} $ for each $ i $. We denote by $\MM_q^2 $ the simplicial complex generated by the following $ q $ facets: $\M_1, \M_2, \ldots, \M_q $. That is,
\[ \MM_q^2 = \langle \M_1, \M_2, \ldots, \M_q \rangle. \]
\end{definition}

 Note that $\MM_q^2$ is a pure simplicial complex with ${\binom{q}{2}} + q$ vertices and $q$ facets, each of dimension $\binom{q}{2}$. For $q=1$, this complex reduces to a single point. \cref{Figureqtree2} illustrates the simplicial complexes for $q=2$ and $q=3$.

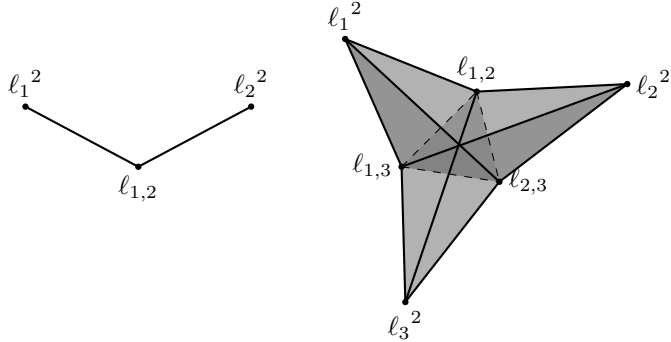
\begin{figure}[h]
    \centering
    \begin{tikzpicture}[scale=1]

        \coordinate (A) at (3.5,0);  
        \coordinate (B) at (2.75,1.7);  
        \coordinate (C) at (4.5,1);  
        \coordinate (D) at (4.8,-0.2);  
        \coordinate (E) at (3.55,-1.8);  
        \coordinate (F) at (6.5,1.1);  
        
        \fill[gray,opacity=0.6] (E) -- (C) -- (D) -- cycle;
        \fill[gray,opacity=0.6] (A) -- (C) -- (E) -- cycle;
        
        \fill[gray,opacity=0.6] (A) -- (C) -- (D) -- cycle;
        \fill[gray,opacity=0.6] (A) -- (D) -- (B) -- cycle;
        \fill[gray,opacity=0.6] (A) -- (C) -- (B) -- cycle;
         \fill[gray,opacity=0.6] (F) -- (C) -- (D) -- cycle;
        \fill[gray,opacity=0.6] (F) -- (A) -- (D) -- cycle;
       
        \draw[thick] (A) -- (B);
        \draw[thick] (C) -- (B);
        \draw[thick] (D) -- (B);
        \draw[thick] (A) -- (E);
        \draw[thick] (C) -- (E);
        \draw[thick] (D) -- (E);
         \draw[thick] (A) -- (F);
        \draw[thick] (C) -- (F);
        \draw[thick] (D) -- (F);

        \draw[dashed] (A) -- (C);
        \draw[dashed] (A) -- (D);
         \draw[dashed] (C) -- (D);
        
        \foreach \point in {A,B,C,D,E,F} \fill[black] (\point) circle (1.2pt);

        \node[below] at (E) {$ {\ell_{3}}^2 $};
        \node[left] at (A) {$ \ell_{1,3} $};
        \node[above] at (B) {${\ell_{1}}^2$};
        \node[above] at (C) {$ \ell_{1,2} $};
        \node[right] at (D) {$ \ell_{2,3} $};
        \node[right] at (F) {$ {\ell_{2}}^2 $};

        \coordinate (P) at (0,0);  
        \coordinate (Q) at (-1.5,0.8);  
        \coordinate (R) at (1.5,0.8);  
       
         \draw[thick] (P) -- (Q);
        \draw[thick] (P) -- (R);
        \foreach \point in {P,Q,R} \fill[black] (\point) circle (1.2pt);
        
        \node[below] at (P) {$ \ell_{1,2} $};
        \node[above] at (Q) {$ {\ell_1}^2 $};
        \node[above] at (R) {$ {\ell_2}^2 $};

    \end{tikzpicture}
    
    \caption{Simplicial complexes $ \MM_2^2 $ (on the left) and $ \MM_3^2 $ (on the right).}
    \label{Figureqtree2}
\end{figure}
  We want to show that the simplicial complex $\MM_q^2$ supports a resolution of $I^2$ for any monomial ideal generated by $q$ monomials. The idea is to show that $\MM_q^2$ is a simplicial tree and then invoke \cref{thmbasic}. In the next proposition, we show that this new simplicial complex is indeed a simplicial tree. 
  
\begin{proposition}\label{p:tree} For any $q\geq1$, the simplicial complex $\MM_q^2$ is a simplicial tree. 
\end{proposition}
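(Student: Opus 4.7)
The plan is to verify the two defining properties of a simplicial tree directly from the structure of $\MM_q^2$: that it is connected and that every subcollection has a leaf. The key structural observation driving everything is that all facets of $\MM_q^2$ share the common subset $\M$, and each facet $\M_i$ differs from the others by exactly one ``private'' vertex $\ell_{i,i}$. In particular, for any $i \neq j$, one has $\M_i \cap \M_j = \M$, since $\ell_{i,i} \notin \M_j$ and $\ell_{j,j} \notin \M_i$.

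For connectedness, when $q \geq 2$ the set $\M$ is nonempty, so any two facets $\M_i$, $\M_j$ intersect nontrivially and the one-step sequence witnesses connectivity. The case $q = 1$ is trivial, since $\MM_1^2$ has a single facet and thus is vacuously connected.

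For the forest property, I would take an arbitrary subcollection $\Delta'$ of $\MM_q^2$ with facets $\M_{i_1}, \ldots, \M_{i_k}$, and show that (any) one of its facets is a leaf. If $k = 1$, then $\M_{i_1}$ is the only facet and is a leaf by definition. If $k \geq 2$, I would pick $F = \M_{i_1}$ as the candidate leaf and $G = \M_{i_2}$ as the candidate joint. For any other facet $H = \M_{i_t}$ with $t \neq 1$, the observation above gives
\[
F \cap H \;=\; \M_{i_1} \cap \M_{i_t} \;=\; \M \;\subseteq\; \M_{i_2} \;=\; G,
\]
which is exactly the leaf condition. Hence $\M_{i_1}$ is a leaf of $\Delta'$, so every subcollection has a leaf, i.e.\ $\MM_q^2$ is a simplicial forest.

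Combining connectedness with the forest property yields the simplicial tree conclusion. There is no real obstacle here: the symmetry of the construction, in which every pair of facets intersects in the same common subset $\M$, makes every facet a leaf with any other facet as joint, so the usual hunt for a leaf is trivialized.
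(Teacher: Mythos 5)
Your proof is correct and takes essentially the same approach as the paper: the paper's one-line argument is that all facets are joined in the common face $\M$, so every facet is a leaf (with any other facet as joint), and every subcollection inherits this structure. You have simply spelled out the details—identifying $\M_i \cap \M_j = \M$ and checking connectedness and the leaf condition explicitly—which is a faithful elaboration of the paper's reasoning.
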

\begin{proof}
Since $\MM^2_q$ is a connected collection of facets all joined in the same face, every facet of $\MM^2_q$ is a leaf, and any other facet is its joint. Every subcollection has the same form, so the result follows immediately.
\end{proof}


 If $I$  is a monomial ideal such that $\G(I) = \{m_1, m_2, \dots, m_q\}$, then the set $A = \{m_i m_j : i, j \in \G(I)\}$ generates $I^2$. However, this set may not be the minimal generating set of $I^2$, and $A$ may contain redundant generators as shown in the next example. 
 
 \begin{example}\label{example31} We consider the two ideals below of  $S=K[x_1,x_2,x_3,x_4]$.
\begin{enumerate}[label=(\alph*)]
    \item If  $I=(m_1,m_2,m_3)=({x_1}^2{x_2}^4{x_3}^3x_4, {x_1}^3{x_2}^3{x_3}^5{x_4}^2, {x_1}^4{x_2}^2{x_3}^3{x_4}^3)$, then  note that the generator $m_1m_3$ of $I^2$ divides ${m_2}^2$. This shows that ${m_2}^2$ is a redundant generator of $I^2$.
    \item Similarly, we see that for the ideal $$J=(m_1,m_2,m_3)=({x_1}^2{x_2}^4{x_3}^3x_4, {x_1}^3{x_2}^3{x_3}^2{x_4}^2, {x_1}^5{x_2}^2{x_3}^3{x_4}^3),$$  ${m_2}^2$ divides $m_1m_3$, making $m_1m_3$ redundant.
\end{enumerate}
\end{example}
 
 In such cases where the square of an ideal has redundant generators, the complex $\MM_q^2$ will provide an unnecessarily {large free resolution}, coming from the extra vertices corresponding to the  redundant generators. In order to avoid this, we now define a labeled induced subcomplex of $\MM_q^2$ by deleting some vertices that correspond to these redundant generators. 

\begin{definition}
 For an ideal $I$ minimally generated by the monomials $m_1, \ldots, m_q$, we define $\MM^2(I)$ to be a labeled induced subcomplex of $\MM^2_q$ formed by the following three rules.

\begin{enumerate}
    \item Label each vertex $\ell_{i,j}$ of $\MM^2_q$ with the monomial $m_i m_j$.
    \item For any indices $i, j, u, v \in [q]$ with $\{i,j\}\neq\{u,v\}$,  if we have $m_i m_j \mid m_u m_v$, then:
    \begin{itemize}
        \item if $m_i m_j = m_u m_v$ and $i = \min\{i, j, u, v\}$, then delete the vertex $\ell_{i,j}$;
        \item if $m_i m_j \ne m_u m_v$, then delete the vertex $\ell_{u,v}$.
      \end{itemize}
    \item Label each of the remaining faces with the least common multiple of the labels of its vertices.
\end{enumerate}

 The remaining labeled subcomplex of $\MM^2_q$ is called $\MM^2(I)$, and is a subcomplex of $\Taylor(I^2)$.
\end{definition}

For the two ideals  $I$ and $J$ of $S$ given in \cref{example31}, the simplicial complexes $\MM^2(I)$ and $\MM^2(J)$ are shown in \cref{Figureqtree3}.
\begin{figure}[h]
    \centering
    \begin{tikzpicture}[scale=1]

        \coordinate (A) at (0,0);  
        \coordinate (B) at (-1,2);  
        \coordinate (C) at (1,1);  
        \coordinate (D) at (1.2,-0.2);  
        \coordinate (E) at (-0.2,-2);  
        
        \fill[gray,opacity=0.6] (E) -- (C) -- (D) -- cycle;
        \fill[gray,opacity=0.6] (A) -- (C) -- (E) -- cycle;
        
        \fill[gray,opacity=0.6] (A) -- (C) -- (D) -- cycle;
        \fill[gray,opacity=0.6] (A) -- (D) -- (B) -- cycle;
        \fill[gray,opacity=0.6] (A) -- (C) -- (B) -- cycle;
        
        \draw[thick] (A) -- (B);
        \draw[thick] (C) -- (B);
        \draw[thick] (D) -- (B);
        \draw[thick] (A) -- (E);
        \draw[thick] (C) -- (E);
        \draw[thick] (D) -- (E);
        \draw[thick] (C) -- (D);

        \draw[dashed] (A) -- (C);
        \draw[dashed] (A) -- (D);
        
        \foreach \point in {A,B,C,D,E} \fill[black] (\point) circle (1.2pt);

        \node[below] at (E) {$ {m_3}^2 $};
        \node[left] at (A) {$ m_1m_3 $};
        \node[above] at (B) {$ {m_1}^2$};
        \node[right] at (C) {$ m_1m_2 $};
        \node[right] at (D) {$ m_2m_3 $};

        \coordinate (P) at (3.9,0);  
        \coordinate (Q) at (3.15,1.7);  
        \coordinate (R) at (4.9,1);  
        \coordinate (S) at (5.2,-0.2);  
        \coordinate (T) at (3.95,-1.8);  
        \coordinate (U) at (6.9,1.1);  
        
        
        \fill[gray,opacity=0.6] (R) -- (S) -- (Q) -- cycle;
        \fill[gray,opacity=0.6] (R) -- (S) -- (T) -- cycle;
         \fill[gray,opacity=0.6] (R) -- (S) -- (U) -- cycle;

        \draw[thick] (Q) -- (R);
        \draw[thick] (Q) -- (S);
        \draw[thick] (U) -- (R);
        \draw[thick] (U) -- (S);
        \draw[thick] (T) -- (R);
        \draw[thick] (T) -- (S);
        \draw[thick] (R) -- (S);
        


        \foreach \point in {Q,R,S,T,U} \fill[black] (\point) circle (1.2pt);

        \node[below] at (T) {$ {m_3}^2 $};
        \node[above] at (Q) {$ {m_1}^2$};
        \node[above] at (R) {$ m_1m_2 $};
        \node[right] at (S) {$ m_2m_3 $};
        \node[right] at (U) {$ {m_2}^2 $};

    \end{tikzpicture}
    
    \caption{For the ideals in \cref{example31}:  $\MM^2(I) $ (left) and $ \MM^2(J) $ (right).}
    \label{Figureqtree3}
\end{figure}

The next lemma will be helpful in proving our main result of this section, \cref{mainthm}.

\begin{lemma}\label{lemma1} If $I =  (m_1, \ldots, m_q)$ is a monomial ideal minimally generated by $q$ monomials $m_1, \ldots, m_q$ in $S = K[x_1, \ldots, x_n]$, then for any $i, j \in [q]$ we have the relation $m_i m_j$ divides $\mathrm{lcm}({m_i}^2, {m_j}^2)$.
\end{lemma}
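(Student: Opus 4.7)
The plan is to prove the divisibility by checking, one variable at a time, that the exponent of each $x_k$ in $m_i m_j$ is at most the exponent of $x_k$ in $\lcm({m_i}^2, {m_j}^2)$. This is the standard way to verify divisibility of monomials, and it reduces the statement to an elementary inequality on nonnegative integers.

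Concretely, I would write $m_i = x_1^{a_1} x_2^{a_2} \cdots x_n^{a_n}$ and $m_j = x_1^{b_1} x_2^{b_2} \cdots x_n^{b_n}$ with $a_k, b_k \in \NN \cup \{0\}$. Then for each variable $x_k$, the exponent in $m_i m_j$ is $a_k + b_k$, while the exponent in $\lcm({m_i}^2, {m_j}^2)$ is $\max(2a_k, 2b_k) = 2 \max(a_k, b_k)$. The required inequality
\[
a_k + b_k \;\leq\; 2 \max(a_k, b_k)
\]
follows at once, since whichever of $a_k, b_k$ is the maximum is $\geq$ the other; equivalently, the minimum is bounded above by the maximum. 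Since this holds for every $k \in [n]$, we conclude $m_i m_j \mid \lcm({m_i}^2, {m_j}^2)$.

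There is no serious obstacle here; the lemma is genuinely a one-line exponent calculation. The only thing worth flagging is that the conclusion does \emph{not} require $m_1, \ldots, m_q$ to be minimal generators of $I$ or to be square-free, so the hypothesis in the statement is used only to fix the notation. This is the content that makes the lemma useful in the sequel: it says that among the vertex labels $\{m_i m_j\}$ appearing on $\MM^2_q$, each ``mixed'' label $m_i m_j$ is automatically divisible into the lcm of the two ``diagonal'' labels ${m_i}^2, {m_j}^2$, which is precisely the structural fact needed to analyze induced subcomplexes $(\MM^2_q)_{\mathbf{m}}$ via \cref{thmbasic}.
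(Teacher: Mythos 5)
Your proof is correct and is essentially identical to the paper's: both expand $m_i$ and $m_j$ in multidegree notation and reduce the claim to the per-variable inequality $a_k + b_k \leq 2\max(a_k,b_k)$. The only differences are notational (you write $a_k, b_k$ where the paper writes $a_{ik}, a_{jk}$), and your closing observation that minimality of the generators is not actually used is accurate.
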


\begin{proof}
    
For each $i$, let us write
\[ m_i = {x_1}^{a_{i1}} {x_2}^{a_{i2}} \cdots {x_n}^{a_{in}}, \]
where $a_{ik} \in \NN \cup \{0\}$, for all $i \in [q]$ and $k \in [n]$. With this notation, we have
\[ m_i m_j = {x_1}^{a_{i1} + a_{j1}} {x_2}^{a_{i2} + a_{j2}} \cdots {x_n}^{a_{in} + a_{jn}}, \]
\[ m_i^2 = {x_1}^{2a_{i1}} {x_2}^{2a_{i2}} \cdots {x_n}^{2a_{in}}, \]
\[ m_j^2 = {x_1}^{2a_{j1}} {x_2}^{2a_{j2}} \cdots {x_n}^{2a_{jn}}, \]
and
\[ \mathrm{lcm}({m_i}^2, {m_j}^2) = {x_1}^{2 \max \{a_{i1}, a_{j1}\}} {x_2}^{2 \max \{a_{i2}, a_{j2}\}} \cdots {x_n}^{2 \max \{a_{in}, a_{jn}\}}. \]

Now observe that for any $i, j \in [q]$ and $k \in [n]$,
\[ a_{ik} + a_{jk} \leq 2 \max \{a_{ik}, a_{jk}\}. \]

Thus, $m_i m_j \mid \mathrm{lcm}({m_i}^2, {m_j}^2)$.

\end{proof}

\begin{theorem}[{\bf Main Theorem 1}]\label{mainthm} 
Let $I\subset S$ be a monomial ideal minimally generated by $q$ monomials $m_1, m_2,  \ldots, m_q$. We have the following:
\begin{itemize}
    \item[(a)] $\MM_q^2$ supports a free resolution of $I^2$;
     \item[(b)] $\MM^2(I)$ supports a free resolution of $I^2$.
\end{itemize}
\end{theorem}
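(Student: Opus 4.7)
The plan for \cref{mainthm} is to apply the simplicial tree criterion \cref{BPS1}(2) to both $\MM_q^2$ and $\MM^2(I)$, reducing each part to verifying that the induced subcomplex on monomials dividing $\bm$ is empty or connected for every $\bm \in \LCM(I^2)$. The key combinatorial tool for producing bridges is \cref{lemma1}.

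For part~(a), $\MM_q^2$ is a simplicial tree by \cref{p:tree}, so \cref{BPS1}(2) applies. The ``book'' structure of $\MM_q^2$---namely, $\M = \{\ell_{s,t} : s<t\}$ is a common face of every facet $\M_i$---immediately gives that within $(\MM_q^2)_{\bm}$ any two off-diagonal vertices are directly joined by an edge, and any diagonal $\ell_{i,i}$ is joined to every off-diagonal through the facet $\M_i$. When $(\MM_q^2)_{\bm}$ contains two diagonals $\ell_{i,i},\ell_{j,j}$, \cref{lemma1} yields $m_im_j \mid \lcm(m_i^2,m_j^2) \mid \bm$, so the off-diagonal vertex $\ell_{i,j}$ itself lies in $(\MM_q^2)_{\bm}$ and bridges them.

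For part~(b), I observe that $\MM^2(I)$ inherits the book structure: letting $V'$ denote its vertex set, its facets are the sets $(\M \cap V') \cup \{\ell_{i,i}\}$ for those $i$ with $\ell_{i,i} \in V'$, and all share the common face $\M \cap V'$. Hence $\MM^2(I)$ is again a simplicial tree, and \cref{BPS1}(2) applies. The connectedness check adapts from part~(a): for $\ell_{i,i},\ell_{j,j} \in \MM^2(I)_{\bm}$, the relation $m_im_j \mid \bm$ still holds by \cref{lemma1}, so every surviving representative in $\MM^2(I)$ of a minimal generator of $I^2$ dividing $m_im_j$ lies in $\MM^2(I)_{\bm}$; if any such representative is off-diagonal, it supplies the required bridge.

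The main obstacle is the residual sub-case in which every such representative is a diagonal $\ell_{p,p}$, so $m_p^2$ divides $m_im_j$ with $p \notin \{i,j\}$ by minimality of the generators of $I$. I plan to resolve this by iterating the same argument on the pair $(i,p)$: each step either produces an off-diagonal representative (completing the bridge) or introduces a further diagonal index. A pigeonhole/exponent analysis---using the midpoint-style constraints $2\alpha_p \le \alpha_i + \alpha_j$ (for the exponent $\alpha_\bullet$ of each variable) implied by the iterated divisibility relations, together with the distinctness of the minimal generators of $I$---shows that such an all-diagonal iteration cannot be sustained, so an off-diagonal representative must eventually appear in $\MM^2(I)_{\bm}$, producing the bridge and establishing (b).
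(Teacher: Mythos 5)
Your treatment of part (a) matches the paper's argument: $\MM_q^2$ is a simplicial tree by \cref{p:tree}, so \cref{BPS1}(2) reduces the problem to checking connectivity of each $(\MM_q^2)_\bm$, and \cref{lemma1} supplies the bridge $\ell_{i,j}$ whenever two diagonal vertices $\ell_{i,i},\ell_{j,j}$ lie in $(\MM_q^2)_\bm$. For part (b) you have put your finger on a genuine subtlety that the paper's own proof passes over: the paper simply says the argument ``follows along the same lines'' after replacing $\ell_{i,j}$ by $m_im_j$, but if $m_im_j$ is not a minimal generator of $I^2$ (or loses the tie-break), the corresponding vertex has been deleted from $\MM^2(I)$ and the bridge from (a) is simply not there. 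Your reformulation of the problem is the right one --- an off-diagonal vertex of $\MM^2(I)$ is adjacent to every other vertex, so the only possible failure of connectivity is a $(\MM^2(I))_\bm$ consisting of two or more isolated diagonal vertices --- and that is precisely what must be ruled out.

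Where your proposal falls short is that the residual case is left as a gesture (``a pigeonhole/exponent analysis \ldots\ cannot be sustained'') rather than an argument, and the iteration you sketch does need care: each step of your chase only gives a divisibility $m_p^2 \mid m_im_j$, not an equality, so you cannot immediately conclude that the sequence of diagonal indices keeps producing new generators. One way to close the gap: let $D = \{p : \ell_{p,p} \in (\MM^2(I))_\bm\}$, assume $|D|\geq 2$ and no off-diagonal vertex survives, and pick $i\neq j$ in $D$ minimizing $\deg m_i + \deg m_j$. By \cref{lemma1} $m_im_j\mid\bm$, so the (unique) surviving vertex labelled by a minimal generator of $I^2$ dividing $m_im_j$ lies in $(\MM^2(I))_\bm$ and by assumption equals some $\ell_{p,p}$ with $m_p^2\mid m_im_j$; minimality of $\G(I)$ forces $p\notin\{i,j\}$, so $p\in D\setminus\{i,j\}$. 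Then $2\deg m_p\leq \deg m_i+\deg m_j$, while minimality of the chosen pair gives $\deg m_p\geq\deg m_i$ and $\deg m_p\geq\deg m_j$; hence all three degrees agree and $m_p^2=m_im_j$ exactly. Now $(i,p)$ is another minimizing pair and the argument iterates, producing $j_0=j,\,j_1=p,\,j_2,\dots$ in $D$ with $m_{j_{k+1}}^2=m_im_{j_k}$. Writing $a$ for the exponent vector of $m_i$ and $b_k$ for that of $m_{j_k}$ gives $b_k-a=(b_0-a)/2^k$, so the $b_k$ are pairwise distinct unless $b_0=a$, contradicting either $m_j\neq m_i$ or the finiteness of $D$. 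Finally, note that you assert $\MM^2(I)$ is ``again a simplicial tree,'' whereas the paper only calls it a forest; connectedness of $\MM^2(I)$ itself is the special case $\bm = m_{V(\MM^2(I))}$ of this very argument, so it too needs (and receives) the same fix.
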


\begin{proof}
\noindent (a)
By \cref{p:tree}, $\MM_q^2$ is a simplicial tree. According to \cref{thmbasic}, to show that $\MM_q^2$ supports a free resolution of $ I^2 $, we must show that $(\MM_q^2)_{\bm}$ is either empty or connected for each monomial $ \bm \in \LCM(I^2) $, where  $(\MM_q^2)_{\bm}$ is the subcomplex of $\MM_q^2$ induced on the set $ V_{\bm} = \{\ell_{i,j} \in V : m_i m_j \mid  \bm\} $.

Now observe that if for some $i \ne j$ we have ${m_i}^2,{m_j}^2 \mid \bm$, then by \cref{lemma1} $m_im_j \mid \bm$, and so every time  $\ell_{i,i}, \ell_{j,j} \in (\MM_q^2)_{\bm}$, they will be both connected to the vertex $\ell_{i,j} \in (\MM_q^2)_{\bm}$. Moreover,  if for $a\neq b$ and $c \neq d$ we have $\ell_{a,b}, \ell_{c,d} \in (\MM_q^2)_{\bm}$, it means that $m_am_b, m_cm_d \mid \bm$, implying that $\lcm(m_am_b, m_cm_d) \mid \bm$. Therefore, once again, the edge connecting $\ell_{a,b}, \ell_{c,d}$ is in $(\MM_q^2)_{\bm}$. Therefore  $(\MM_q^2)_{\bm}$ is connected. 

\noindent (b) The simplicial complex $\MM^2(I)$, being an induced subcomplex of the simplicial tree $\MM_q^2$, forms a simplicial forest. Let $ V(\MM^2(I)) $ denote the set of vertices in $\MM^2(I)$. The proof of this part follows along the same lines as that of part (a) with the following modifications:

\begin{itemize}
    \item for any $ q \geq 2 $, replace $\MM_q^2$ with $\MM^2(I)$;
    \item for any $ i, j \in [q] $, replace $\ell_{i,j}$ with $ m_i m_j $;
    \item for any monomial ${\bm} \in \LCM(I^2)$, set $ V_{\bm} = \{m_i m_j \in V(\MM^2(I)) : m_i m_j \mid {\bm}\} $.
\end{itemize}

\end{proof}
 In view of \cref{remarkmain}, a direct consequence of \cref{mainthm} is \cref{coropd} given below.

\begin{corollary}\label{coropd}  If $I$ is  any monomial ideal minimally generated by $q$ monomials, then \begin{equation}\label{pdm}\pd_S(I^2)\leq \dim(\MM_q^2)={q\choose 2}.\end{equation}
    
\end{corollary}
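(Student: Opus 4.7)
The plan is to read the statement as two assertions and handle them separately. The inequality $\pd_S(I^2) \le \dim(\MM_q^2)$ should follow directly from results already proved in the paper, and the equality $\dim(\MM_q^2) = \binom{q}{2}$ should be a one-line combinatorial count.

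First, I would invoke \cref{mainthm}(a), which guarantees that $\MM_q^2$ supports a free resolution of $I^2$. Then I would apply \cref{remarkmain}, which says that whenever a simplicial complex $\Delta$ supports a resolution of a monomial ideal $J$, the $f$-vector $\mathbf{f}(\Delta)=(f_0,\ldots,f_d)$ with $d=\dim(\Delta)$ gives upper bounds $\beta_i(J)\le f_i$ for every $i$. In particular, $\beta_i(I^2) = 0$ for every $i > \dim(\MM_q^2)$, and since $\pd_S(I^2)$ is the largest $i$ with $\beta_i(I^2)\ne 0$ in the minimal free resolution, we obtain $\pd_S(I^2) \le \dim(\MM_q^2)$.

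For the equality $\dim(\MM_q^2) = \binom{q}{2}$, I would appeal directly to \cref{M2complexdefinition}: the set $\M = \{\ell_{i,j} \st 1\le i<j\le q\}$ has cardinality $\binom{q}{2}$, so each facet $\M_i = \M \cup \{\ell_{i,i}\}$ has cardinality $\binom{q}{2}+1$, giving dimension $\binom{q}{2}$. Since $\MM_q^2$ is pure with these $q$ facets, its dimension is $\binom{q}{2}$.

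There is no real obstacle here; the corollary is essentially an immediate bookkeeping consequence of \cref{mainthm} combined with the general upper-bound principle recorded in \cref{remarkmain}. The only small care needed is in phrasing the passage from \enquote{$\beta_i(I^2)\le f_i$ for all $i\le d$} to the projective-dimension bound, which is standard since $f_i = 0$ for $i>d$ forces $\beta_i(I^2)=0$ in that range.
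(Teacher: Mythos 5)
Your proof is correct and matches the paper's approach exactly: the paper also derives this as an immediate consequence of \cref{mainthm} and \cref{remarkmain}, and the dimension count $\dim(\MM_q^2)=\binom{q}{2}$ is recorded right after \cref{M2complexdefinition}. You have simply spelled out the bookkeeping that the paper leaves implicit.
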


\begin{remark}[{\bf The bound is sharp}]\label{remark1} Note that there are monomial ideals $I$ with $q$ generators  for which $\MM^2(I)=\MM_q^2$ and the free resolution supported on them is minimal.  For example, consider $S=K[x_1,x_2,x_3, x_4, x_5, x_6]$ and $I= ( {x_1}^3{x_2}^2{x_3}^3x_4{x_5}^2x_6, {x_1}^2{x_2}^3{x_3}{x_4}^3{x_5}{x_6}^2,  x_1x_2{x_3}^2{x_4}^2{x_5}^3{x_6}^3).$ Then $\MM_3^2$ supports the minimal free resolution of  $I^2$ which is minimally generated by the monomials: $${x_1}^6 {x_2}^4 {x_3}^6 {x_4}^2 {x_5}^4 {x_6}^2, \ {x_1}^5 {x_2}^5 {x_3}^4 {x_4}^4 {x_5}^3 {x_6}^3,  \ {x_1}^4 {x_2}^6 {x_3}^2 {x_4}^6 {x_5}^2 {x_6}^4, $$ $$  {x_1}^4 {x_2}^3 {x_3}^5 {x_4}^3 {x_5}^5 {x_6}^4,
 \ {x_1}^3 {x_2}^4 {x_3}^3 {x_4}^5 {x_5}^4 {x_6}^5,   \ {x_1}^2 {x_2}^2 {x_3}^4 {x_4}^4 {x_5}^6 {x_6}^6.$$
\end{remark}
 
 \begin{remark}[{\bf Comparison with square-free case}]\label{remark2}
The upper bound on the projective dimension of the quadratic power of a square-free monomial ideal $ I $ follows from Theorem 3.9 of \cite{Cooper2021}, and is given by
\begin{equation}\label{pds}
\operatorname{pd}_S(I^2) \leq \dim\left(\LL_q^2\right) = \binom{q}{2} - 1.
\end{equation}

There exist square-free monomial ideals, such as the extremal ideal $ \E_q $ defined in \cite{Cooper2024}, whose quadratic power attains the bound described in \eqref{pds}. Moreover, certain monomial ideals, such as the permutation ideal $ \Tq $ defined in \cref{s:4} of this paper, also achieve the upper bound on the projective dimension as described in \eqref{pdm}. For example, the quadratic power of the monomial ideal discussed in \cref{remark1} attains the bound in \eqref{pdm}, which equals 3 in the case where $ q = 3 $.
\end{remark}

\begin{remark}[{\bf Polarization and powers}]\label{rem: polarization}
When studying the free resolution of a general monomial ideal $ I $, polarization can be used to reduce the problem to the square-free case, as many homological invariants such as projective dimension, regularity, and graded Betti numbers are preserved under polarization; 
  see, for example, {\cite[Corollary 1.6.3]{HerzogMonomials}}. However, equations \eqref{pdm} and \eqref{pds} highlight that polarization is not an effective technique when studying the free resolution of powers of general monomial ideals, as it loses information on homological invariants like projective dimension. More specifically, if $I$ is the ideal in \cref{remark1}, and $\mathcal{P}(I)$ is the polarization of $I$,  then computations from Macaulay2 \cite{M2} show that
  $$\pd(I^2)=\pd(\mathcal{P}(I^2))=\binom{3}{2}=3 \qand \pd(\mathcal{P}(I)^2)=2 \lneq 3.$$
  Consequently, this results in a potential loss of information on regularity and graded Betti numbers as well. 
\end{remark}


 The simplicial complexes $\MM_q^2$ and $\MM^2(I)$ discussed in \cref{mainthm} provide tighter bounds on the Betti numbers of $I^2$ compared to those given by $\Taylor(I^2)$. The complex $\MM_q^2$ offers bounds based solely on the number of generators of the ideal. Whereas, the complex $\MM^2(I)$ provides upper bounds that depend on the specific generators of $I^2$. These bounds are given in the following corollaries.

\begin{corollary}\label{cor1} If $I$ is a monomial ideal of $S$ minimally generated by $q \geq 2$ monomials, then, for each $d \geq 0$, the $d^{th}$ Betti number $\beta_d({I}^2)$ satisfies the inequality
$$
\beta_d(I^2) \leq \binom{\frac{q(q-1)}{2}}{d+1}+q\binom{\frac{q(q-1)}{2}}{d}.
$$ 
    
\end{corollary}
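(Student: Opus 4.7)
The plan is to extract the bound directly from the combinatorics of $\MM_q^2$, then invoke \cref{mainthm}(a) together with \cref{remarkmain}. Since \cref{mainthm}(a) tells us that $\MM_q^2$ supports a free resolution of $I^2$, \cref{remarkmain} immediately yields $\beta_d(I^2)\leq f_d(\MM_q^2)$ for each $d\geq 0$. So the entire content of the corollary reduces to computing (or upper-bounding) the entries of the $f$-vector of $\MM_q^2$, which depends only on $q$.

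For the counting step, I would recall the structure from \cref{M2complexdefinition}: the vertex set of $\MM_q^2$ partitions into the ``off-diagonal'' vertices $\M=\{\ell_{i,j}\st 1\leq i<j\leq q\}$, of which there are $\binom{q}{2}=\tfrac{q(q-1)}{2}$, and the ``diagonal'' vertices $\ell_{1,1},\ldots,\ell_{q,q}$. The facets are $\M_i=\M\cup\{\ell_{i,i}\}$, so a subset $\sigma$ of the vertex set is a face of $\MM_q^2$ if and only if $\sigma$ is contained in some single $\M_i$; equivalently, $\sigma$ contains at most one diagonal vertex. Counting the $d$-dimensional faces (i.e.\ the $(d+1)$-subsets that are faces) then splits cleanly into two cases:
\begin{itemize}
    \item Faces containing no diagonal vertex: any $(d+1)$-subset of $\M$ works, giving $\binom{\binom{q}{2}}{d+1}$ such faces.
    \item Faces containing exactly one diagonal vertex $\ell_{i,i}$: for each of the $q$ choices of $i$, the remaining $d$ vertices form an arbitrary $d$-subset of $\M$, giving $q\binom{\binom{q}{2}}{d}$ such faces.
\end{itemize}
Summing, $f_d(\MM_q^2)=\binom{\tfrac{q(q-1)}{2}}{d+1}+q\binom{\tfrac{q(q-1)}{2}}{d}$, which is exactly the stated upper bound.

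There is essentially no obstacle here: the structural work was done in \cref{p:tree} and \cref{mainthm}, and the only remaining task is a two-case enumeration of $(d+1)$-subsets of the vertex set of $\MM_q^2$. The one small sanity check I would include is that the formula is consistent with the projective dimension bound in \cref{coropd}: when $d=\binom{q}{2}$ the first binomial vanishes and the second gives $q$, reflecting that $\MM_q^2$ has $q$ facets each of dimension $\binom{q}{2}$, and for $d>\binom{q}{2}$ both binomials vanish, recovering $\pd_S(I^2)\leq\binom{q}{2}$.
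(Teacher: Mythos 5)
Your argument is the same as the paper's: invoke \cref{mainthm}(a) and \cref{remarkmain} to reduce to computing $f_d(\MM_q^2)$, then count $d$-faces by whether they contain a diagonal vertex $\ell_{k,k}$ (at most one is possible since the facets are $\M_i = \M \cup \{\ell_{i,i}\}$). The extra sanity check against \cref{coropd} is a nice touch but the substance matches the paper exactly.
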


\begin{proof}
By \cref{mainthm}(a), $I^2$ has a free resolution supported on $\MM_q^2$. Therefore, $\beta_d(I^2)$ is bounded above by $f_d(\MM_q^2)$ for any $d \geq 0$, where $f_d(\MM_q^2)$ is the $d^{th}$ component of the $f$-vector of $\MM_q^2$.

There are two types of $d$-dimensional faces in $\MM_q^2$: those that contain $\ell_{k,k}$ for some $k$, and those that do not contain any elements of the form $\ell_{k,k}$. The former type has $q \binom{\frac{q(q-1)}{2}}{d}$ faces, while the latter type has $\binom{\frac{q(q-1)}{2}}{d+1}$ faces. Consequently, we have
\[
\beta_d(I^2) \leq f_d(\MM_q^2) = \binom{\frac{q^2-q}{2}}{d+1} + q \binom{\frac{q^2-q}{2}}{d}.
\]
\end{proof}

 To show how \cref{cor1} refines this bound over the $9$-simplex for $\Taylor(I^2)$, we have included \cref{table1} for $q=4$.
\begin{table}[h]
    \centering
    \renewcommand{\arraystretch}{1.5} 
    \begin{tabular}{|c|c|c|c|c|c|c|c|}
        \hline
        {$d$} & 0 & 1 & 2 & 3 & 4 & 5 & 6 \\
        \hline\hline
        $f_d\bigl(9-\text{simplex}\bigr) = \binom{10}{d+1}$ & 10 & 45 & 120 & 210 & 252 & 210 & 120 \\
        \hline
        $f_d\bigl(\MM_4^2\bigr) = \binom{6}{d+1} + 4 \binom{6}{d}$ & 10 & 39 & 80 & 95 & 66 & 25 & 4 \\
        \hline
    \end{tabular}
    \caption{Upper bound comparison for Betti numbers of $I^2$ for $q=4$.}
    \label{table1}
\end{table}

 The next corollary provides an even tighter upper bound for the Betti numbers of $I^2$, incorporating the influence of the generators of the ideal.

\begin{corollary}{\label{cor2}}
Let $I$ be a monomial ideal of $S$ minimally generated by $q \geq 2$ monomials. Then for each $d \geq 0$, the $d^{th}$ Betti number $\beta_d(I^2)$ satisfies the inequality
\[
\beta_d(I^2) \leq \binom{\frac{q^2-q-2s}{2}}{d+1} + (q-t)\binom{\frac{q^2-q-2s}{2}}{d},
\]
where $s$ is the number of vertices of the form $\ell_{i,j}$ (with $i \neq j$) that are deleted from $\MM_q^2$ in the process of forming $\MM^2(I)$, and $t$ is the number of vertices of the form $\ell_{k,k}$ that are deleted during the same process.

\end{corollary}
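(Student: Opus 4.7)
The strategy is to follow the same template as the proof of \cref{cor1}, replacing $\MM_q^2$ with $\MM^2(I)$ throughout. The first step is to invoke \cref{mainthm}(b), which guarantees that $\MM^2(I)$ supports a free resolution of $I^2$. Then, by \cref{remarkmain}, the $f$-vector of $\MM^2(I)$ dominates the Betti vector of $I^2$, so it suffices to verify that
\[
f_d(\MM^2(I)) = \binom{\tfrac{q^2-q-2s}{2}}{d+1} + (q-t)\binom{\tfrac{q^2-q-2s}{2}}{d}.
\]

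The second step is the face count. Since $\MM^2(I)$ is obtained from $\MM_q^2$ by vertex deletion, it is an induced subcomplex, so its faces are exactly the faces of $\MM_q^2$ whose vertices all survive the deletion process. By definition of $\MM^2(I)$, exactly $s$ of the $\binom{q}{2}$ vertices of the form $\ell_{i,j}$ with $i<j$ (call these the \emph{off-diagonal} vertices) are deleted, and exactly $t$ of the $q$ vertices of the form $\ell_{k,k}$ (the \emph{diagonal} vertices) are deleted. Thus $\MM^2(I)$ has $\tfrac{q(q-1)}{2}-s=\tfrac{q^2-q-2s}{2}$ surviving off-diagonal vertices and $q-t$ surviving diagonal vertices.

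The third step is the dichotomy on faces, which is the same one used in the proof of \cref{cor1}. Recall that in $\MM_q^2$ every facet has the form $\M_k=\M\cup\{\ell_{k,k}\}$, so any face of $\MM_q^2$ (and hence of $\MM^2(I)$) contains \emph{at most one} diagonal vertex. Therefore each $d$-dimensional face of $\MM^2(I)$ is either (i) a $(d+1)$-subset of the surviving off-diagonal vertices, contributing $\binom{\tfrac{q^2-q-2s}{2}}{d+1}$ faces, or (ii) the union of one surviving diagonal vertex with a $d$-subset of the surviving off-diagonal vertices, contributing $(q-t)\binom{\tfrac{q^2-q-2s}{2}}{d}$ faces. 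Summing these two counts gives the claimed $f_d(\MM^2(I))$, and combined with $\beta_d(I^2)\leq f_d(\MM^2(I))$ this completes the proof.

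I do not expect a serious obstacle here; the argument is essentially a refined version of \cref{cor1} exploiting the fact that every off-diagonal vertex lies in every facet of $\MM_q^2$, so deletions interact cleanly with the face count. The only point worth double-checking is that the claimed partition of $d$-faces into types (i) and (ii) is complete and disjoint, which follows immediately from the fact that the diagonal vertices $\ell_{1,1},\ldots,\ell_{q,q}$ are pairwise non-adjacent in $\MM_q^2$.
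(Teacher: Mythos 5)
Your argument is correct and matches the paper's proof, which likewise invokes \cref{mainthm}(b) and then counts $d$-faces of $\MM^2(I)$ by splitting into those containing a diagonal vertex $\ell_{k,k}$ and those that do not; you have simply spelled out the face count (and the point that no face contains two diagonal vertices) in more detail than the paper, which compresses this to a one-line reference to \cref{cor1}.
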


\begin{proof}
Using part (b) of \cref{mainthm}, we have $\beta_d(I^2) \leq f_d(\MM^2(I))$. The computation of $f_d(\MM^2(I))$ follows similarly to the previous corollary,  based on the segregation of $(d+1)$-subsets of $\MM^2(I)$ into those that contain $\ell_{k,k}$ for some $k$ and those that do not contain any elements of the form $\ell_{k,k}$.
\end{proof}

 Consider the monomial ideal \begin{equation}\label{IdealExample}
 I=(m_1,m_2,m_3,m_4) =({x_1}^5,{x_1}^2{x_2}^4{x_3}^3,{x_1}^3{x_2}^3{x_3}^5{x_4}^2,{x_1}^4{x_2}^2{x_3}^3{x_4}^3)
 \end{equation}in the polynomial ring $S=K[x_1,x_2,x_3,x_4]$. Note that $m_2m_3\mid m_3m_4$, $m_1m_2\mid {m_4}^2$, and $m_2m_4\mid {m_3}^2$.  Hence,
   $|\G(I^2)|=7$, and from \cref{cor2}, we have  $s=1$, and $t=2$. Macaulay2, \cite{M2}, gives the following Betti table for $I^2$: 
   
   \[
\begin{array}{c|c|c|c|c}
d & 0 & 1 & 2 & 3  \\
\hline
\beta_d(I^2) & 7 & 12 & 8 & 2 \\
\end{array}
\]

    The bounds for the quadratic power of this ideal $I$ are provided in \cref{table2extended} below.




\begin{table}[h]
    \centering
    \renewcommand{\arraystretch}{1.5} 
    \begin{tabular}{|c|c|c|c|c|c|c|c|}
        \hline
        {$d$} & 0 & 1 & 2 & 3 & 4 & 5 & 6 \\
        \hline\hline
        $f_d\bigl({\Taylor}(I^2)\bigr) = \binom{7}{d+1}$ & 7 & 21 & 35 & 35 & 21 & 7 & 1 \\
        \hline
        $f_d\bigl(\MM^2(I)\bigr) = \binom{5}{d+1} + 2 \binom{5}{d}$ & 7 & 20 & 30 & 25 & 11 & 2 & 0 \\
        \hline
    \end{tabular}
    \caption{Upper bound (generator based) comparison for the Betti numbers of $I^2$, where 
$I$ is the ideal defined in \eqref{IdealExample}.}
    \label{table2extended}
\end{table}

\section{Permutation ideals and quadratic powers}\label{s:4}

 Let $ q $ be a positive integer, and let $ S_q $ denote the group of all permutations of the set $[q] = \{1, 2, \dots, q\}$. For any permutation $\sigma = i_1 i_2 \cdots i_q \in S_q$, define $\sigma(j) = i_j$.  That is,  $\sigma(j)$ is the element in the $j^{th}$ position of the permutation $\sigma$, reading from left to right. For instance, if $q = 5$ and $\sigma = 35412$, we have 
\[
\sigma(1) = 3, \quad \sigma(2) = 5, \quad \sigma(3) = 4, \quad \sigma(4) = 1, \quad \sigma(5) = 2.
\]
In this section, we introduce a special class of monomial ideals, called permutation ideals. A permutation ideal, denoted as $\Tq$, is given by $q$ generators. These ideals are defined to play a similar role for general monomial ideals as the extremal ideals, see \cite{Cooper2024}, \cite{Faridi2023}, play in the study of powers of square-free monomial ideals. Permutation ideals are constructed such that for any monomial ideal $I$ minimally generated by $q$ monomials in the polynomial ring $S = K[x_1, x_2, \ldots, x_n]$, we have 
$$\beta(I^2)\leq\beta({{\Tq}^2}).$$
They are also important because the simplicial complex $\MM_q^2$ supports a minimal resolution of the square of ${\Tq}^2$. In fact, our main theorem of this section proves that $\MM_q^2$ is the Scarf complex of ${\Tq}^2$ (\cref{scarfthm}).

\begin{definition}[{\bf Permutation Ideal}]   Let $ q $ be a positive integer, and let $ S_q $ denote the  symmetric group of permutations of the set $[q]$. For each permutation $ \sigma \in S_q $, associate a variable $ x_{\sigma} $, and consider the polynomial ring $ S_\T = K[x_{\sigma} : \sigma \in S_q] $ over a field $ K $.
For each $ i \in [q] $, define a monomial $ \tau_i $ in $ S_\T $ as
\[
\tau_i = \prod\limits_{\sigma \in S_q} {x_{\sigma}}^{\sigma(i)},
\]
and define the permutation ideal
\[
\Tq = (\tau_1, \tau_2, \ldots, \tau_q).
\]
\end{definition}

 Note that $ S_\T $ is a polynomial ring in $q!$ variables over $K$. For each $i\in [q]$, $$\deg(\tau_i)=\sum\limits_{\sigma \in S_q}\sigma(i) =(q-1)!(1+2+3+\cdots+q)=\frac{(q+1)!}{2}. $$

\begin{example} For $q=4$, we have
\\ $ S_\T=K[x_{1234}, \, x_{1243}, \, x_{1324}, \, x_{1342}, \, x_{1423}, \, x_{1432}, 
x_{2134}, x_{2143}, \, x_{2314}, \, x_{2341}, \, x_{2413}, \, x_{2431}, 
x_{3124},\\ x_{3142}, \, x_{3214},\, x_{3241}, \, x_{3412}, \, x_{3421}, 
x_{4123},\, x_{4132},\, x_{4213}, \, x_{4231},\, x_{4312}, \, x_{4321}]$, {\text{and}} 
$$\tau_1=x_{1234}x_{1243}x_{1324}x_{1342} x_{1423}x_{1432}x_{2134}^2 x_{2143}^2 x_{2314}^2 x_{2341}^2x_{2413}^2x_{2431}^2 
x_{3124}^3 x_{3142}^3x_{3214}^3x_{3241}^3x_{3412}^3$$ $$ x_{3421}^3x_{4123}^4x_{4132}^4x_{4213}^4x_{4231}^4x_{4312}^4x_{4321}^4,$$
$$\tau_2=x_{1234}^2x_{1243}^2x_{1324}^3x_{1342}^3 x_{1423}^4x_{1432}^4x_{2134} x_{2143} x_{2314}^3 x_{2341}^3x_{2413}^4x_{2431}^4 
x_{3124} x_{3142}x_{3214}^2x_{3241}^2x_{3412}^4$$ $$ x_{3421}^4x_{4123}x_{4132}x_{4213}^2x_{4231}^2x_{4312}^3x_{4321}^3,$$
$$\tau_3=x_{1234}^3x_{1243}^4x_{1324}^2x_{1342}^4 x_{1423}^2x_{1432}^3x_{2134}^3 x_{2143}^4 x_{2314} x_{2341}^4x_{2413}x_{2431}^3 
x_{3124}^2 x_{3142}^4x_{3214}x_{3241}^4x_{3412}$$ $$ x_{3421}^2x_{4123}^2x_{4132}^3x_{4213}x_{4231}^3x_{4312}x_{4321}^2,$$
$$\tau_4=x_{1234}^4x_{1243}^3x_{1324}^4x_{1342}^2 x_{1423}^3x_{1432}^2x_{2134}^4 x_{2143}^3 x_{2314}^4 x_{2341}x_{2413}^3x_{2431} 
x_{3124}^4 x_{3142}^2x_{3214}^4x_{3241}x_{3412}^2$$ $$ x_{3421}x_{4123}^3x_{4132}^2x_{4213}^3x_{4231}x_{4312}^2x_{4321}.$$

\end{example}

 As $\Tq$ is a monomial ideal generated by $q$ monomials, by \cref{mainthm} the simplicial complex $\MM_q^2$ supports a free resolution of ${\Tq}^2$.  We now show that the free resolution of ${\Tq}^2$ supported on $\MM_q^2$ is also minimal by showing that $\Scarf({\Tq}^2) =\MM_q^2$.  We will use \cref{lem:scarf} to achieve this goal.  We first need to show that the ideal ${\Tq}^2$ has as many generators as the number of vertices of  $\MM_q^2$.

Let $\sigma \in S_q$ and $i,j \in [q]$.  Recalling that $\sigma$ is bijective, we observe that
\begin{equation}\label{eq:gen-ii}
{x_\sigma}^{2q} \mid \tau_i\tau_j \iff 
i=j \qand \sigma(i)=q.
\end{equation}
and 
\begin{equation}\label{eq:gen-ij}
{x_\sigma}^{2q} \nmid \tau_i\tau_j \qand {x_\sigma}^{2q-1} \mid \tau_i\tau_j \iff 
\{\sigma(i), \sigma(j)\} = \{q,q-1\}. 
\end{equation}

\begin{lemma}\label{samevertexlemma} 
Let $\Tq$ be the permutation ideal in $S_\T$. Then
\begin{itemize}
    \item[(a)] $\Tq$ is minimally generated by $\{\tau_1, \tau_2, \ldots, \tau_q\}$
    \item[(b)] ${\Tq}^2$ is minimally generated by $\{\tau_{i}\tau_{j} \st 1 \leq i \leq j \leq q\}$. In particular, $$|\G({\Tq}^2)| = \binom{q}{2} + q = |V(\MM_q^2)|.$$
\end{itemize}
\end{lemma}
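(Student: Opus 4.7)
The plan is to use the two divisibility criteria \eqref{eq:gen-ii} and \eqref{eq:gen-ij} as ``signatures'' that detect the generator $\tau_u\tau_v$ from its exponent vector. The key observation is that for each $\sigma \in S_q$, the exponent of $x_\sigma$ in $\tau_u\tau_v$ is exactly $\sigma(u)+\sigma(v)$, and the two relations tell us that the maximal value $2q$ occurs if and only if $u=v$ with $\sigma(u)=q$, while the near-maximal value $2q-1$ occurs if and only if $\{\sigma(u),\sigma(v)\}=\{q-1,q\}$. Divisibility $\tau_i\tau_j \mid \tau_u\tau_v$ amounts to $\sigma(i)+\sigma(j) \leq \sigma(u)+\sigma(v)$ for every $\sigma$, so picking $\sigma$ that pushes the left-hand side to a maximum will force a rigid matching of indices on the right.

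For part (a), I would argue that if $i \neq j$, then any $\sigma \in S_q$ with $\sigma(i)=q$ satisfies $\sigma(j) < q$, so $x_\sigma^q \mid \tau_i$ but $x_\sigma^q \nmid \tau_j$, showing $\tau_i \nmid \tau_j$. Hence $\{\tau_1,\ldots,\tau_q\}$ is the minimal generating set of $\Tq$.

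For part (b), I would assume, toward a contradiction, that $\{i,j\} \neq \{u,v\}$ (as multisets) but $\tau_i\tau_j \mid \tau_u\tau_v$, and split into two cases. If $i=j$, choose $\sigma$ with $\sigma(i)=q$; then $x_\sigma^{2q}\mid \tau_i^2\mid \tau_u\tau_v$, and \eqref{eq:gen-ii} forces $u=v$ with $\sigma(u)=q=\sigma(i)$, hence $u=v=i$, contradicting distinctness. If $i \neq j$, choose $\sigma$ with $\sigma(i)=q$ and $\sigma(j)=q-1$; then $x_\sigma^{2q-1}\mid \tau_i\tau_j\mid \tau_u\tau_v$. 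Either $x_\sigma^{2q}\mid \tau_u\tau_v$, in which case \eqref{eq:gen-ii} forces $u=v=i$ and then $\tau_i\tau_j \mid \tau_i^2$ gives $\tau_j \mid \tau_i$, contradicting part (a); or $x_\sigma^{2q}\nmid \tau_u\tau_v$, in which case \eqref{eq:gen-ij} forces $\{\sigma(u),\sigma(v)\}=\{q-1,q\}=\{\sigma(i),\sigma(j)\}$, so $\{u,v\}=\{i,j\}$, again a contradiction. The count $|\G({\Tq}^2)|=\binom{q}{2}+q$ then follows from enumerating pairs $1\leq i\leq j\leq q$, and this equals $|V(\MM_q^2)|$ by \cref{M2complexdefinition}.

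The proof is largely bookkeeping, and no genuine obstacle is expected: the two relations \eqref{eq:gen-ii} and \eqref{eq:gen-ij} do the heavy lifting. The only place requiring care is ensuring the case analysis in part (b) exhausts all possibilities and that the ``$u=v=i$ subcase'' of the $i\neq j$ branch is properly reduced back to part (a), rather than generating a circular dependence.
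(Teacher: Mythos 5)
Your proof is correct and takes essentially the same approach as the paper: both use the divisibility criteria \eqref{eq:gen-ii} and \eqref{eq:gen-ij} with a well-chosen permutation $\sigma$ to force index identifications, and the case analysis in part~(b) matches the paper's case-by-case argument. The only (harmless) difference is in part~(a), where you observe that $\sigma(i)=q$ alone already forces $\sigma(j)<q$, so the paper's additional requirement $\sigma(j)=q-1$ there is superfluous.
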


\begin{proof}
The statement in~(a) follows from the fact that for any distinct $i, j \in [q]$, we can choose a permutation $\sigma \in S_q$ such that $\sigma(i) = q$ and $\sigma(j) = q - 1$, which implies that $\tau_i \nmid \tau_j$.  Therefore, no generator $\tau_k$ divides any other, and the set $\{\tau_1, \ldots, \tau_q\}$ is indeed the minimal generating set of $\T_q$.

 To see~(b), observe that the set 
$G = \{\tau_{i}\tau_{j} \st 1 \leq i \leq j \leq q\}$
is a generating set of $ {\Tq}^2 $. We need to show that there are no redundant generators of $ {\Tq}^2 $ in $ G $. Suppose, on the contrary, that there exist two distinct (multi)subsets $ \{i_1, j_1\} $ and $ \{i_2, j_2\} $ of $ [q] $ such that 
\begin{equation}\label{eq:div}
\tau_{i_1}\tau_{j_1} \mid \tau_{i_2}\tau_{j_2}.
\end{equation}

\noindent {\bf \underline{Case 1}:} If $i_1 \neq j_1$, pick $\sigma \in S_q$ such that
$\sigma(i_1)=q$ and $\sigma(j_1)=q-1$. Then by \eqref{eq:gen-ij} and \eqref{eq:div} 
$${x_\sigma}^{2q-1} \mid 
\tau_{i_1}\tau_{j_1} \mid 
\tau_{i_2}\tau_{j_2}.$$
By \eqref{eq:gen-ij}, we then have either $ \{i_1, j_1\} =\{i_2, j_2\}$, in which case we are done, or otherwise 
${x_\sigma}^{2q} \mid \tau_{i_2}\tau_{j_2}$, which by \eqref{eq:gen-ii} means that $i_2=j_2$, and $\sigma(i_2)=q$. As $\sigma$ is bijective, this means that 
$i_1=i_2=j_2$. Hence we have
$$\tau_{i_1}\tau_{j_1} \mid {\tau_{i_1}}^2
\implies \tau_{j_1} \mid \tau_{i_1}$$
contradicting (a).

 \noindent {\bf \underline{Case 2}:} If $i_1 = j_1$, pick $\sigma \in S_q$ such that
$\sigma(i_1)=q$. Then by \eqref{eq:gen-ii} and \eqref{eq:div}
$${x_\sigma}^{2q} \mid 
{\tau_{i_1}}^2 \mid 
\tau_{i_2}\tau_{j_2}$$
which by \eqref{eq:gen-ii} implies that
 $i_1=i_2=j_2$.  This contradicts the assumption that $\{i_1, j_1\}$ and $\{i_2, j_2\}$ are distinct (multi)sets. 
\end{proof}

\begin{example}For $q=3$, we have $ S_\T=K[x_{123}, \, x_{132}, \, x_{213}, \, x_{231}, \, x_{312}, \, x_{321}]$ and the ideal ${{\T_3}}^2$ has the following ${3\choose 2}+3$ generators in  the minimal generating set $\G({\T_3}^2)=\{{\tau_1}^2,\tau_1\tau_2,\tau_1\tau_3,{\tau_2}^2,\tau_2\tau_3, {\tau_3}^2\}$, where 

$$
\begin{array}{l @{\hspace{6em}} l}
{\tau_1}^2 = x_{123}^2x_{132}^2x_{213}^4x_{231}^4x_{312}^6x_{321}^6 
& \tau_1\tau_2 = x_{123}^3x_{132}^4x_{213}^3x_{231}^5x_{312}^4x_{321}^5\\
&\\
{\tau_2}^2 = x_{123}^4x_{132}^6x_{213}^2x_{231}^6x_{312}^2x_{321}^4 
& \tau_2\tau_3 = x_{123}^5x_{132}^5x_{213}^4x_{231}^4x_{312}^3x_{321}^3\\
&\\
{\tau_3}^2 = x_{123}^6x_{132}^4x_{213}^6x_{231}^2x_{312}^4x_{321}^2
& \tau_1\tau_3 = x_{123}^4x_{132}^3x_{213}^5x_{231}^3x_{312}^5x_{321}^4.
\end{array}
$$   
\end{example}

\begin{notation} \cref{samevertexlemma} allows us to assign a new labeling to the faces of $\MM_q^2$, which we will use for the remainder of this section. The labeling is as follows: we assign to each vertex $\ell_{i,j}$ of $\MM_q^2$ the corresponding monomial generator $\tau_i \tau_j$ of ${\Tq}^2$, and to each face $\gamma \in \MM_q^2$ the monomial label
\[
m_\gamma = \lcm\{m \mid m \in \gamma\}.
\]
With this labeling, the simplicial complex $\MM_q^2$ now consists of the facets  
$\M_1, \M_2, \ldots, \M_q$, where for each $i \in [q]$,
\[
\M_i = \M \cup \{{\tau_{i}}^2\},
\]
such that
\[
\M = \{\tau_{i} \tau_{j} \mid 1 \leq i < j \leq q\}.
\]
Moreover, if $A = \{\tau_{i_1} \tau_{j_1}, \tau_{i_2} \tau_{j_2}, \ldots, \tau_{i_s} \tau_{j_s}\}$ is any subset of $ V(\MM_q^2) $, then 

\begin{equation}\label{4.1}
m_A= \prod\limits_{\sigma \in S_q} x_{\sigma}^{\max \{\sigma(i_1)+\sigma(j_1),\  \sigma(i_2)+\sigma(j_2),\ \dots\ ,\ \sigma(i_s)+\sigma(j_s)\}}.
\end{equation}
    
\end{notation}

 We now establish a few more results that will be useful in proving the main result of this section and, more broadly, in the study of permutation ideals.

\begin{lemma}\label{iiilemma} With the notation set above, we have
\begin{enumerate}[label=(\roman*)]

    \item  $m_{\M}= \prod\limits_{\sigma \in S_q} {x_{\sigma}}^{2q-1};$

    \item for a face $\gamma \in \MM_q^2$ and a permutation $\sigma \in S_q$, 
    \begin{equation}\label{eq:sigma-gamma}
    {x_\sigma}^{2q} \mid m_\gamma \iff 
    {\tau_i}^2 \in \gamma\  \text{for some}\ i\in[q] \ \text{with}\ \sigma(i)=q;
    \end{equation}

    \item for a face $\gamma \in \MM_q^2$   and $\sigma \in S_q$ with $\sigma(i)=q$ and $\sigma(j)=q-1$, if ${\tau_i}^2 \notin \gamma$ then 
    \begin{equation}\label{eq:sigma-gamma-M}
    {x_\sigma}^{2q-1} \mid m_\gamma \iff 
    \tau_i \tau_j \in \gamma;
    \end{equation}
   
     \item for any $k\in[q]$, we have $m_{\M_{k}} = \prod\limits_{\sigma \in S_q, \ \sigma(k)=q} {x_{\sigma}}^{{2q}}\prod\limits_{\sigma \in S_q, \ \sigma(k)\neq q} {x_{\sigma}}^{{2q-1}};$ 
       
  \item $m_{V(\MM_q^2)}= \prod\limits_{\sigma \in S_q} {x_{\sigma}}^{2q}$.
\end{enumerate}
\end{lemma}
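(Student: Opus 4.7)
The plan is to treat all five parts together by fixing a permutation $\sigma \in S_q$ and analyzing the exponent of $x_\sigma$ in $m_\gamma$ via the formula \eqref{4.1}. In every statement, the exponent of $x_\sigma$ in $m_\gamma$ is $\max\{\sigma(i_k)+\sigma(j_k) \st \tau_{i_k}\tau_{j_k}\in\gamma\}$, and since $\sigma$ is a bijection of $[q]$, the sum $\sigma(i)+\sigma(j)$ can reach $2q$ only when $i=j$ and $\sigma(i)=q$, and can reach $2q-1$ only when $\{\sigma(i),\sigma(j)\}=\{q,q-1\}$ with $i\neq j$. These two observations are essentially \eqref{eq:gen-ii} and \eqref{eq:gen-ij} transported through \eqref{4.1}, and they drive everything.

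For (i), since $\M$ contains only vertices $\tau_i\tau_j$ with $i<j$, the largest attainable value of $\sigma(i)+\sigma(j)$ is $q+(q-1)=2q-1$, achieved by choosing the unique $i,j$ with $\sigma(i)=q$ and $\sigma(j)=q-1$; such a pair exists for every $\sigma$, so the exponent is $2q-1$ uniformly. For (ii), the same bijectivity argument shows that $x_\sigma^{2q}\mid m_\gamma$ forces some $\tau_{i_k}\tau_{j_k}\in\gamma$ with $\sigma(i_k)+\sigma(j_k)\ge 2q$, which by injectivity of $\sigma$ requires $i_k=j_k$ and $\sigma(i_k)=q$; the converse is immediate.

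For (iii), I would first invoke (ii) to rule out exponent $2q$ under the hypothesis $\tau_i^2\notin\gamma$, and then observe that attaining exponent exactly $2q-1$ requires some vertex $\tau_a\tau_b\in\gamma$ with $\{\sigma(a),\sigma(b)\}=\{q,q-1\}$; since $\sigma(i)=q$ and $\sigma(j)=q-1$ are prescribed, bijectivity forces $\{a,b\}=\{i,j\}$. Item (iv) follows by combining (i) with the effect of adjoining the single vertex $\tau_k^2$: the exponent of $x_\sigma$ in $m_{\M_k}$ becomes $\max(2q-1,\,2\sigma(k))$, which equals $2q$ exactly when $\sigma(k)=q$, and stays at $2q-1$ otherwise since $\sigma(k)\le q-1$ gives $2\sigma(k)\le 2q-2$. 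Finally, (v) is immediate: for every $\sigma\in S_q$ there is a unique $i$ with $\sigma(i)=q$, and $\tau_i^2\in V(\MM_q^2)$, so applying (ii) with $\gamma=V(\MM_q^2)$ shows the exponent of every $x_\sigma$ equals $2q$.

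There is no real obstacle here; once one commits to reading \eqref{4.1} one permutation $\sigma$ at a time, all five assertions reduce to the elementary fact that the two largest values of a bijection $[q]\to[q]$ are $q$ and $q-1$. The only place that needs slight care is (iii), where one must remember to use (ii) first to exclude exponent $2q$ before singling out the unique edge realizing exponent $2q-1$.
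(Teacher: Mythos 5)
Your proof is correct and takes essentially the same route as the paper's: both arguments read off exponents from formula \eqref{4.1} one permutation $\sigma$ at a time and reduce to the observations in \eqref{eq:gen-ii} and \eqref{eq:gen-ij} that a bijection $\sigma:[q]\to[q]$ gives $\sigma(i)+\sigma(j)=2q$ only when $i=j$ with $\sigma(i)=q$, and $2q-1$ only when $\{\sigma(i),\sigma(j)\}=\{q,q-1\}$. The paper compresses this into two sentences; your write-up just spells out the same steps, including the appropriate use of part (ii) to exclude the $2q$ case before establishing part (iii).
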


\begin{proof}
Using \eqref{4.1} and \eqref{eq:gen-ij}, to find $m_{\M}$ we observe that 
\begin{equation*}
m_{\M}= \prod\limits_{\sigma \in S_q} x_{\sigma}^{\max\{\sigma(i)+\sigma(j) \st 1 \ \leq \ i \ < \ j \ \leq \  q \}}
=\prod\limits_{\sigma \in S_q} {x_{\sigma}}^{2q-1}.
\end{equation*}

The statement in \eqref{eq:sigma-gamma} follows directly from 
\eqref{eq:gen-ii} and, similarly,  \eqref{eq:sigma-gamma-M} is a consequence of 
\eqref{eq:gen-ij}. 
The remaining statements now follow immediately.

\end{proof}

\begin{proposition}\label{thmdistinctlabels}
For any two distinct faces  $\gamma_1$ and $\gamma_2$ of $ \MM_q^2 $, we have $m_{\gamma_1}\neq m_{\gamma_2}$.
    \end{proposition}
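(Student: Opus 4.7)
The plan is to show that the labeling $\gamma \mapsto m_\gamma$ is injective on the faces of $\MM_q^2$. Given any two distinct faces $\gamma_1, \gamma_2$, I will pick a vertex $v$ that lies in one but not the other and, after swapping the labels if necessary, assume $v \in \gamma_1 \setminus \gamma_2$. The goal is then to exhibit a permutation $\sigma \in S_q$ whose corresponding variable $x_\sigma$ has different exponents in $m_{\gamma_1}$ and $m_{\gamma_2}$. The two tools that drive the argument are \cref{iiilemma}(ii) and (iii): the first characterizes divisibility by $x_\sigma^{2q}$ in $m_\gamma$ via the presence of a diagonal vertex $\tau_{\sigma^{-1}(q)}^2$; the second characterizes divisibility by $x_\sigma^{2q-1}$ in terms of the off-diagonal vertex $\tau_{\sigma^{-1}(q)}\tau_{\sigma^{-1}(q-1)}$, provided the corresponding diagonal vertex is absent from $\gamma$. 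I will also use the structural fact that every face of $\MM_q^2$ lies in a single facet $\M_k$, so it contains at most one vertex of the form $\tau_l^2$.

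If $v = \tau_k^2$ for some $k \in [q]$, choose any $\sigma \in S_q$ with $\sigma(k) = q$. Then $\tau_k^2 \in \gamma_1$ together with \cref{iiilemma}(ii) gives $x_\sigma^{2q} \mid m_{\gamma_1}$. On the other hand, $\tau_k^2 \notin \gamma_2$, combined with the fact that $k$ is the unique index with $\sigma(k) = q$, forces $x_\sigma^{2q} \nmid m_{\gamma_2}$ by the same lemma, so the two labels differ.

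If $v = \tau_i \tau_j$ for some $1 \leq i < j \leq q$, then since $\gamma_2$ contains at most one diagonal vertex, at least one of $\tau_i^2, \tau_j^2$ lies outside $\gamma_2$; after possibly interchanging the roles of $i$ and $j$, assume $\tau_i^2 \notin \gamma_2$. Choose $\sigma \in S_q$ with $\sigma(i) = q$ and $\sigma(j) = q - 1$. From \eqref{4.1}, $\tau_i \tau_j \in \gamma_1$ yields $x_\sigma^{2q-1} \mid m_{\gamma_1}$. Since $\tau_i^2 \notin \gamma_2$, \cref{iiilemma}(iii) applies to $\gamma_2$ and tells us that $x_\sigma^{2q-1} \mid m_{\gamma_2}$ would force $\tau_i \tau_j \in \gamma_2$, contradicting $v \notin \gamma_2$; hence $x_\sigma^{2q-1} \nmid m_{\gamma_2}$, and again $m_{\gamma_1} \neq m_{\gamma_2}$.

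The step I expect to be the main obstacle is the second case: a naive choice $\sigma(i) = q$, $\sigma(j) = q - 1$ can fail when $\tau_i^2 \in \gamma_2$, because then the $x_\sigma$-exponent in $m_{\gamma_2}$ is pushed up to $2q$ by the diagonal contribution and the $x_\sigma^{2q-1}$ test becomes inconclusive. The remedy uses the structural constraint that a face of $\MM_q^2$ contains at most one diagonal vertex, so at least one of $\tau_i^2, \tau_j^2$ must be absent from $\gamma_2$, allowing the relabeling $i \leftrightarrow j$ that makes \cref{iiilemma}(iii) go through.
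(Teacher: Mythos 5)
Your proof is correct and takes essentially the same approach as the paper: it rests on \cref{iiilemma}(ii) and (iii) and the fact that each face of $\MM_q^2$ contains at most one diagonal vertex, and separates $m_{\gamma_1}$ from $m_{\gamma_2}$ by constructing a suitable permutation $\sigma$. The only cosmetic difference is that you argue directly, choosing upfront which of $i,j$ to send to $q$ based on which diagonal vertex is absent from $\gamma_2$, whereas the paper argues by contradiction and uses two permutations $\sigma,\sigma'$ in the off-diagonal case to force $a=b$.
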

    
\begin{proof} 
Let $\gamma_1$ and $\gamma_2$ be any two distinct faces of $ \MM_q^2$ and assume  $m_{\gamma_1} = m_{\gamma_2}$. By \eqref{eq:sigma-gamma}  if a vertex ${\tau_i}^2$ is in one of $\gamma_1$ or $\gamma_2$ and not in the other, we immediately get  $m_{\gamma_1} \neq  m_{\gamma_2}$. So we may assume they both contain the same vertices of the form 
${\tau_i}^2$. Since a facet of $\MM^2_q$ can have at most one such vertex, we may assume both $\gamma_1, \gamma_2 \subseteq \M_i$.

Suppose the vertex labeled $\tau_a \tau_b$
is in $\gamma_1$ but not in $\gamma_2$  for some $a \neq b$ and let $\sigma \in S_q$ be a permutation such that $\sigma(a) = q$ and $\sigma(b) = q - 1$. Then 
${x_\sigma}^{2q-1} \mid m_{\gamma_1}= m_{\gamma_2}$. By \eqref{eq:sigma-gamma-M} we must have ${\tau_a}^2 \in \gamma_2$, which implies that $a=i$.

Now let $\sigma'\in S_q$  be a permutation such that $\sigma'(b) = q$ and $\sigma'(a) = q - 1$. Then 
${x_{\sigma'}}^{2q-1} \mid m_{\gamma_1}= m_{\gamma_2}$. Thus by \eqref{eq:sigma-gamma-M} we must have ${\tau_b}^2 \in \gamma_2$, which implies that $b=i$.

Now we have $a=b=i$, but this is a contradiction to our assumption $a \neq b$.  Therefore we have $\tau_a \tau_b \in \gamma_1$ if and only if $\tau_a \tau_b \in \gamma_2$.
\end{proof}
 
 We now show that the minimal free resolution of the square of the permutation ideal $ {\Tq}^2 $ is supported by $ \MM_q^2 $ and that these ideals are also Scarf ideals. To establish these results, we will use \cref{lem:scarf}.

\begin{theorem}[{\bf Main Theorem 2}]\label{scarfthm}
If $q$ is any positive integer, then the Scarf complex of the square of the permutation ideal, $\T_q^2$, coincides with the simplicial complex $ \MM_q^2 $.  That is, 
\[
\operatorname{Scarf}({\Tq}^2) = \MM_q^2.
\]
\end{theorem}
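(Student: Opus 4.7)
The plan is to verify the two conditions of \cref{lem:scarf} for faces of $\MM_q^2$ and exhibit failures of these conditions for faces of $\Taylor(\Tq^2)$ lying outside of $\MM_q^2$. Throughout, I use the vertex identification $\ell_{i,j} \leftrightarrow \tau_i\tau_j$ justified by \cref{samevertexlemma}(b) (so that $\Taylor(\Tq^2)$ and $\MM_q^2$ share the same vertex set), together with the structural fact that a subset of this vertex set is a face of $\MM_q^2$ if and only if it contains at most one vertex of the diagonal form $\tau_i^2$.

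First I would show $\MM_q^2 \subseteq \Scarf(\Tq^2)$. For the removal condition, if $\gamma \in \MM_q^2$ and $v \in \gamma$, then $\gamma \setminus \{v\} \in \MM_q^2$ as well, so \cref{thmdistinctlabels} immediately gives $m_{\gamma} \neq m_{\gamma \setminus \{v\}}$. For the addition condition, take $v \notin \gamma$: if $\gamma \cup \{v\} \in \MM_q^2$, invoke \cref{thmdistinctlabels} again; otherwise $\gamma \cup \{v\}$ contains two distinct diagonal vertices, which forces $v = \tau_j^2$ for some $j$ with $\tau_j^2 \notin \gamma$. I then show $\tau_j^2 \nmid m_\gamma$: by part~(ii) of \cref{iiilemma}, for any permutation $\sigma$ with $\sigma(j) = q$, the relation $x_\sigma^{2q} \mid m_\gamma$ would require some $\tau_k^2 \in \gamma$ with $\sigma(k) = q$, i.e.\ $k = j$, contradicting $\tau_j^2 \notin \gamma$. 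Hence $m_{\gamma \cup \{v\}} \neq m_\gamma$.

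Next I would prove $\Scarf(\Tq^2) \subseteq \MM_q^2$ by contrapositive. If $\gamma \in \Taylor(\Tq^2) \setminus \MM_q^2$, then $\gamma$ must contain two distinct diagonal vertices $\tau_i^2$ and $\tau_j^2$ with $i \neq j$. By \cref{lemma1}, $\tau_i\tau_j$ divides $\lcm(\tau_i^2, \tau_j^2)$, so $\tau_i\tau_j \mid m_\gamma$. If $\tau_i\tau_j \in \gamma$, then removing this vertex leaves $\tau_i^2$ and $\tau_j^2$ in the face, so the monomial label is unchanged, violating condition~(1) of \cref{lem:scarf}. If $\tau_i\tau_j \notin \gamma$, then adding this vertex leaves the label unchanged, violating condition~(2). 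Either way $\gamma \notin \Scarf(\Tq^2)$, completing the argument.

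The main obstacle is the addition case in the forward direction, since $\gamma \cup \{v\}$ can escape $\MM_q^2$ and \cref{thmdistinctlabels} no longer applies directly; here one must descend to the explicit $x_\sigma$-exponent computations of \cref{iiilemma} to exclude $\tau_j^2 \mid m_\gamma$. The rest of the argument is a clean consequence of the divisibility relation in \cref{lemma1} together with the characterization of faces of $\MM_q^2$ as those subsets containing at most one diagonal vertex.
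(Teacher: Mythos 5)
Your proof of the inclusion $\MM_q^2 \subseteq \Scarf(\Tq^2)$ follows essentially the same path as the paper: Proposition \ref{thmdistinctlabels} handles both the removal condition and the addition condition when $\gamma\cup\{v\}$ remains a face, and the escaping case ($v=\tau_j^2$ with $\gamma$ already containing a different diagonal vertex) is settled by choosing $\sigma$ with $\sigma(j)=q$ and invoking \eqref{eq:sigma-gamma}, exactly as in the paper.

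Where you genuinely diverge is in the reverse inclusion $\Scarf(\Tq^2)\subseteq\MM_q^2$. The paper disposes of this in one line by appealing to the standard (but unproved here) fact that the Scarf complex of an ideal is contained in any simplicial complex supporting a free resolution of that ideal, combined with \cref{mainthm}. You instead give a direct, self-contained argument: any face $\gamma$ of $\Taylor(\Tq^2)$ outside $\MM_q^2$ contains two diagonal vertices $\tau_i^2,\tau_j^2$; by \cref{lemma1}, $\tau_i\tau_j\mid\lcm(\tau_i^2,\tau_j^2)\mid m_\gamma$, so removing $\tau_i\tau_j$ (if present) or adding it (if absent) leaves $m_\gamma$ unchanged and violates condition (1) or (2) of \cref{lem:scarf}, respectively. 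This is correct and arguably cleaner: it avoids relying on an unstated general theorem, uses only tools already established in the paper, and also makes transparent \emph{why} the faces with two diagonal vertices fail to be Scarf faces --- namely, the divisibility relation of \cref{lemma1}. The trade-off is that the paper's one-line argument is shorter and illustrates a general principle, whereas yours is longer but entirely elementary.
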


\begin{proof}  It has already been shown in \cref{mainthm} that for any monomial ideal $I$ minimally generated by $q$ monomials, the simplicial complex $\MM_q^2$ supports a free resolution of $I^2$. In particular, this implies that $\MM_q^2$ also supports a free resolution of ${\Tq}^2$.  Therefore, we have the following inclusion
 
\begin{equation}
\operatorname{Scarf}({\Tq}^2) \subseteq\MM_q^2.
\end{equation}
 Now, to prove the other inclusion, we show that every face $\gamma$ of  $\MM_q^2$ is a Scarf face. Using \cref{lem:scarf}, we need to show that for any $\gamma \in \MM_q^2$, we have the following two conditions satisfied:
\begin{enumerate}
    \item[C1:] $ m_{\gamma} \neq m_{\gamma \setminus \{v\}} $ for all vertices $ v \in \gamma $;
    \item[C2:] $ m_{\gamma \cup \{v\}} \neq m_{\gamma} $ for all vertices $ v \in V(\MM_q^2) \setminus \gamma $.
\end{enumerate}

Since $\gamma$ is a face of $\MM_q^2$, any subset of $\gamma$, and in particular $\gamma \setminus \{v\}$, will also be a face of $\MM_q^2$. This implies, by \cref{thmdistinctlabels}, that condition C1 is satisfied.

 Now, considering the second condition, if adding a vertex $ v \in V(\MM_q^2) \setminus \gamma $ to $\gamma$ results in $\gamma \cup \{v\}$ staying within $\MM_q^2$, then \cref{thmdistinctlabels} can once again be applied to conclude that condition C2 is satisfied. 
 
 Suppose $\gamma \cup \{v\}$ is no longer a face of $\MM_q^2$ and $m_{\gamma \cup \{v\}}=m_\gamma$. The face  $\gamma$ is contained within a unique facet, say $\M_i$, containing  ${\tau_i}^2$, and so we must have $v = {\tau_j}^2$ for some $j \neq i$.  We now consider a permutation $\sigma \in S_q$ such that $\sigma(j) = q$, so that   ${x_\sigma}^{2q} \mid m_{\gamma \cup \{v\}}=m_\gamma$. By \eqref{eq:sigma-gamma} we must have ${\tau_j}^2 \in \gamma$, a contradiction. Therefore, $m_{\gamma \cup \{v\}} \neq m_{\gamma}$. 
 
 We conclude that $\operatorname{Scarf}({\Tq}^2) = \MM_q^2$, and thus ${\Tq}^2$ is a Scarf ideal.

\end{proof}

 The following two corollaries follow directly from \cref{scarfthm} and highlight its significance.
 
\begin{corollary}\label{corominfree}
 The simplicial complex $\MM_q^2$ supports the minimal free resolution ${\Tq}^2$.
\end{corollary}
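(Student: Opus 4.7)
The plan is a short assembly of results already established above. By \cref{mainthm}(a), $\MM_q^2$ supports some free resolution of ${\Tq}^2$, so the only thing left to verify is that this resolution is minimal.

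The quickest route is via the Scarf complex: \cref{scarfthm} identifies $\MM_q^2$ with $\Scarf({\Tq}^2)$, and it is a standard fact (recorded immediately after \cref{scarf} and originally due to \cite{Mermin2012}) that whenever the Scarf complex supports a free resolution of a monomial ideal, that resolution is automatically minimal. Combining these two facts yields the corollary in one line.

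Alternatively, one may appeal directly to the minimality criterion in \cref{thmbasic}(3), which demands that for every pair of nested faces $\tau_1 \subsetneq \tau_2$ of $\MM_q^2$ one has $m_{\tau_1} \neq m_{\tau_2}$. But \cref{thmdistinctlabels} already supplies the stronger conclusion that \emph{any} two distinct faces of $\MM_q^2$ carry distinct monomial labels, so in particular the nested-pair hypothesis is satisfied and the minimality criterion follows at once.

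There is no substantive obstacle here: all the technical work — producing a resolution supported on $\MM_q^2$, proving that faces of $\MM_q^2$ have pairwise distinct labels, and identifying $\MM_q^2$ as the Scarf complex of ${\Tq}^2$ — has already been completed in \cref{mainthm}, \cref{thmdistinctlabels}, and \cref{scarfthm}. The proof of the corollary is therefore a two-line assembly of these ingredients, with no further computation required.
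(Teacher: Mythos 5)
Your first route is exactly the paper's proof: combine \cref{mainthm}(a) with \cref{scarfthm} and the fact that Scarf resolutions are minimal. The alternative you sketch via \cref{thmbasic}(3) and \cref{thmdistinctlabels} is also valid and equally short, but the paper uses the Scarf-complex argument.
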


\begin{proof}
From \cref{mainthm}, it follows that the simplicial complex $\MM_q^2$ supports a free resolution of ${\Tq}^2$. Furthermore, by \cref{scarfthm}, we imply that this free resolution is, in fact, the Scarf resolution of ${\Tq}^2$. Therefore, the simplicial complex $\MM_q^2$ supports the minimal free resolution of ${\Tq}^2$.

\end{proof}

\begin{corollary}{\label{corobettivector}} 
Let $I$ be any monomial ideal minimally generated by $q$ monomials in the polynomial ring $S = K[x_1, x_2, \ldots, x_n]$. Then for all $0\leq i\leq {q \choose 2}$, we have
$$\beta_i({I}^2)\leq\beta_i({{\Tq}^2}).$$   
\end{corollary}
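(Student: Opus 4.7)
The plan is to chain together the two main theorems of the paper via \cref{remarkmain}, which converts the supporting-complex statements into numerical inequalities/equalities on Betti numbers. The key observation is that the same simplicial complex $\MM_q^2$ plays two different roles: for an arbitrary monomial ideal $I$ generated by $q$ monomials, it merely \emph{supports} a free resolution of $I^2$, while for the permutation ideal $\T_q$ it supports the \emph{minimal} free resolution of $\T_q^2$. Comparing Betti numbers against the common $f$-vector of $\MM_q^2$ will then immediately give the desired inequality.

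More concretely, I would proceed as follows. First, apply \cref{mainthm}(a) to the ideal $I$: the simplicial complex $\MM_q^2$ supports a free resolution of $I^2$. By the upper-bound half of \cref{remarkmain}, this yields
\[
\beta_i(I^2) \leq f_i(\MM_q^2) \qfor 0 \leq i \leq \binom{q}{2}.
\]
Next, apply \cref{corominfree} (which itself packages \cref{scarfthm} with \cref{mainthm}) to conclude that $\MM_q^2$ supports the \emph{minimal} free resolution of $\T_q^2$. The equality half of \cref{remarkmain} then gives
\[
\beta_i(\T_q^2) = f_i(\MM_q^2) \qfor 0 \leq i \leq \binom{q}{2}.
\]
Combining the inequality with the equality yields $\beta_i(I^2) \leq \beta_i(\T_q^2)$ for all $i$ in the relevant range, which is exactly the claim.

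There is essentially no obstacle here: the whole argument is a two-line deduction from results already established earlier in \cref{s:3,s:4}. The only small point worth noting in the write-up is the dimension bookkeeping, namely that $\dim(\MM_q^2) = \binom{q}{2}$ (already recorded in \cref{coropd}), so the $f$-vector has components indexed by $0 \leq i \leq \binom{q}{2}$, matching the range in the statement; for $i$ outside this range both Betti numbers vanish and the inequality is trivial.
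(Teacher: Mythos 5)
Your proof is correct and follows essentially the same route as the paper: both compare $\beta_i(I^2)$ and $\beta_i(\T_q^2)$ against the common $f$-vector of $\MM_q^2$, using \cref{mainthm} with \cref{remarkmain} for the inequality and \cref{corominfree} (which packages \cref{scarfthm}) for the equality. Your write-up simply spells out the two-line deduction that the paper leaves implicit.
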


\begin{proof}
 By combining \cref{remarkmain}, \cref{scarfthm}, and \cref{corominfree}, we obtain the desired conclusion.

\end{proof}
We conclude this article with the following remark.
\begin{remark}\label{rem:lessvaribales} From \cref{iiilemma}, as well as the proofs of \cref{thmdistinctlabels} and \cref{scarfthm}, one can observe that the monomial generators of the permutation ideal $ \Tq $ can be replaced with alternative generators involving fewer variables, while still preserving the validity of both \cref{thmdistinctlabels} and \cref{scarfthm}. 

For instance, for each $ i \in [q] $, consider a monomial $ \tau_i' \in S_\T $ defined by
\[
\tau_i' = \prod\limits_{\sigma \in S_q \, : \, \sigma(i) \in \{q-1,\, q\}} x_{\sigma}^{\sigma(i)}.
\]
Define the corresponding new ideal by
\[
\Tq' = (\tau_1', \tau_2', \ldots, \tau_q').
\]
Then, \cref{scarfthm} continues to hold for the square of this ideal, ${(\Tq')}^{2}$. However, we prefer to work with the permutation ideal ${\Tq}$ rather than ${\Tq'}$. The reason is that, when studying higher powers of monomial ideals, it becomes necessary to introduce more variables to ensure distinct least common multiples in the lcm lattice of the ideal. However, to investigate higher powers, one must use alternative approaches such as techniques from Discrete Morse Theory. Our ongoing work  explores this direction in greater detail.

\end{remark}


\bibliographystyle{plain}
\bibliography{references}

\end{document}